\documentclass[a4paper]{amsart}
\usepackage{hyperref}
\usepackage[top=1in, bottom=2in, left=1.25in, right=1.25in]{geometry}
\usepackage{amsmath,amssymb,amsthm,amsfonts}
\setlength{\parindent}{0pt}
\setlength{\parskip}{0.45em}
\usepackage{pst-node}
\usepackage{tikz-cd} 
\theoremstyle{definition}
\newtheorem{defn}{Definition}[section]

\newtheorem{prop}[defn]{Proposition}

\newtheorem{thm}[defn]{Theorem}
\newtheorem{corr}[defn]{Corollary}
\newtheorem{lem}[defn]{Lemma}
\newtheorem{obs}[defn]{Observation}
\newtheorem{question}[defn]{Question}
\newtheorem{remark}[defn]{Remark}
\newtheorem{claim}[defn]{claim}



\setlength{\textheight}{1.1\textheight}
\title{On some applications of strongly compact Prikry forcing}
\author{Amitayu Banerjee}

\address{Department of Logic, Institute of Philosophy, E\"otv\"os Lor\'and University,
M\'{u}zeum krt. 4/i Budapest, H-1088 Hungary}
\email{banerjee.amitayu@gmail.com}
\keywords{strongly compact Prikry forcing, symmetric inner model of a forcing extension, appropriate automorphism technique}
\begin{document}
\maketitle
\begin{abstract} We work with symmetric inner models of forcing extensions based on {\em strongly compact Prikry forcing} to extend some known results.
\end{abstract}

\section{Introduction}
\subsection{Extending a result of Dimitriou I} Apter, Dimitriou and Koepke \cite{ADK2016} proved that in Gitik's model \cite{Git1980}, every singular cardinal is a Rowbottom cardinal with a Rowbottom filter.
Further in \cite{ADK2016}, they conjectured about the possibility of removing the additional assumption that {\em `every strongly compact cardinal is a limit of measurable cardinals'}. Apter communicated to us that the methods of \cite{AH1991} can be applied to prove the conjecture. The conjecture is still open, but inspired from the {\em appropriate automorphism technique} used in [\cite{AH1991}, \textbf{Lemma 3.1}], we construct a model based on {\em strongly compact prikry forcing}, with a sequence of successive singular Rowbottom cardinals that has order type larger than $\omega$, and smaller than or equal to $(\omega_{1})^{V}$, if $V$ is the ground model. This may remove the additional assumption that {\em `every strongly compact cardinal is a limit of measurable cardinals'} from [\cite{Dim2011}, \textbf{corollary 2.32}].

\begin{thm}
{\em Suppose for some ordinal $\rho\in (\omega,\omega_{1}]$, there is a $\rho$-long sequence $\langle \kappa_{\epsilon}:0<\epsilon<\rho\rangle$ of strongly compact cardinals, which sequence has limit $\eta$ in a ground model $V$ of ZFC. Then there is a forcing extension $V[G]$ that has a symmetric inner model 
$\mathcal{N}$ in which the following hold.
\begin{enumerate}
    \item All cardinals in the interval $(\omega,\eta)$ are uncountable and singular of cofinality $\omega$. 
    \item All cardinals in the interval $(\omega,\eta)$ carry a Rowbottom filter and almost Ramsey.
\end{enumerate}
}\end{thm}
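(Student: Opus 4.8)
The plan is to mimic the Gitik-style symmetric construction of \cite{Git1980}, organised throughout by the \textbf{appropriate automorphism technique} of [\cite{AH1991}, \textbf{Lemma 3.1}]. For each $\epsilon$ with $0<\epsilon<\rho$ I would use the strong compactness of $\kappa_{\epsilon}$ to fix a $\kappa_{\epsilon}$-complete fine ultrafilter $\mathcal{U}_{\epsilon}$ on $P_{\kappa_{\epsilon}}(\lambda_{\epsilon})$, where the $\lambda_{\epsilon}$ are chosen increasing and cofinal in $\eta$ so that the blocks $[\kappa_{\epsilon},\lambda_{\epsilon})$ together exhaust $(\omega,\eta)$. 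Let $\mathbb{P}_{\epsilon}$ be the corresponding strongly compact Prikry forcing, let $\mathbb{P}$ be the Easton-support product $\prod_{0<\epsilon<\rho}\mathbb{P}_{\epsilon}$, and let $G$ be $V$-generic. Each coordinate adds an increasing cofinal $\omega$-sequence $\langle s^{\epsilon}_{n}:n<\omega\rangle$ of elements of $P_{\kappa_{\epsilon}}(\lambda_{\epsilon})$; by fineness the unions $\bigcup_{n}s^{\epsilon}_{n}$ are cofinal in $\lambda_{\epsilon}$, so $\mathbb{P}$ singularises $\kappa_{\epsilon}$ and the whole block to cofinality $\omega$, while the Prikry property guarantees that no new bounded subsets are added below any $\kappa_{\epsilon}$ and that $\omega_{1}^{V}$ is preserved.

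Next I would define the automorphism group $\mathcal{G}\le\mathrm{Aut}(\mathbb{P})$ generated coordinatewise by the ``appropriate'' automorphisms isolated in [\cite{AH1991}, \textbf{Lemma 3.1}] --- those permuting the measure-one sets and stems under which each $\mathcal{U}_{\epsilon}$ is invariant --- and take $\mathcal{F}$ to be the normal filter of subgroups generated by the pointwise stabilisers of bounded initial segments of the Prikry data. Setting $\mathcal{N}=\mathrm{HS}$, the class of hereditarily $\mathcal{F}$-symmetric sets, gives $V\subseteq\mathcal{N}\subseteq V[G]$ with $\mathcal{N}\models\mathrm{ZF}$. For (1), each cofinal $\omega$-sequence through a block is fixed as a set by a subgroup in $\mathcal{F}$, hence lies in $\mathcal{N}$ and witnesses cofinality $\omega$ for every $\mathcal{N}$-cardinal of that block; thus all cardinals of $\mathcal{N}$ in $(\omega,\eta)$ have cofinality $\omega$. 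That they remain uncountable cardinals, rather than collapsing to $\omega$, is where the symmetry is essential: any surjection witnessing a collapse, or any choice function picking the Prikry sequences, would have a symmetric name that a suitable automorphism moves off itself, contradicting the $\kappa_{\epsilon}$-completeness of $\mathcal{U}_{\epsilon}$ --- precisely the mechanism of [\cite{AH1991}, \textbf{Lemma 3.1}].

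For (2), to each $\mathcal{N}$-cardinal $\delta\in(\omega,\eta)$, lying in the block governed by $\kappa_{\epsilon}$, I would attach the filter $F_{\delta}$ generated in $\mathcal{N}$ by the $\mathcal{U}_{\epsilon}$-projections onto $\delta$. Given $f:[\delta]^{<\omega}\to\mu$ with $\mu<\delta$ and $f\in\mathcal{N}$, I would fix a symmetric name $\dot f$ stabilised by some group in $\mathcal{F}$; the homogeneity of the $\mathcal{G}$-action then forces $\dot f$ to be decided, on a tail, by conditions depending on only finitely much Prikry data, after which $\kappa_{\epsilon}$-completeness and fineness of $\mathcal{U}_{\epsilon}$ produce a set in $F_{\delta}$ on which $f$ assumes at most $\aleph_{0}$ values, giving the Rowbottom property; the relation $\delta\to(\alpha)^{<\omega}_{2}$ for all $\alpha<\delta$, i.e.\ almost Ramseyness, is read off the same homogeneous sets.

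The main obstacle, and the step demanding the most care, is establishing the Rowbottom and almost-Ramsey properties uniformly for the cardinals $\delta$ that are not themselves strongly compact. Such $\delta$ carry no measure of their own, so the homogeneous sets must be fabricated from $\mathcal{U}_{\epsilon}$ on $P_{\kappa_{\epsilon}}(\lambda_{\epsilon})$ through the symmetric quotient, and one must verify both that the appropriate automorphisms annihilate every candidate counterexample colouring and that the resulting $F_{\delta}$ is a genuinely nonprincipal filter --- equivalently, that the block structure has left $(\omega,\eta)$ with exactly the intended cofinality-$\omega$ cardinal hierarchy on which the descended measures can act.
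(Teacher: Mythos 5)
Your plan fails at the level of the forcing itself, before any symmetric-model machinery starts. The blocks $[\kappa_{\epsilon},\lambda_{\epsilon})$ cannot ``exhaust $(\omega,\eta)$'': every block begins at a strongly compact cardinal, so nothing covers $(\omega,\kappa_{1})$, and since Prikry-type forcings add no bounded subsets of $\kappa_{1}$, in your model $\omega_{1}^{V},\omega_{2}^{V},\dots$ remain regular, contradicting conclusion (1). (Your aside that ``$\omega_{1}^{V}$ is preserved'' is in fact inconsistent with the theorem being proved.) The paper covers this region with a separate symmetric product of finite collapses (its type 0 forcing). Worse, because $\rho>\omega$ the sequence $\langle\kappa_{\epsilon}\rangle$ has limit points $\beta<\eta$, and the regular cardinals lying above such a $\beta$ but below the next strongly compact have no largest strongly compact beneath them; a block $[\kappa_{\epsilon},\lambda_{\epsilon})$ with $\kappa_{\epsilon}<\beta$ can reach such a cardinal only if $\lambda_{\epsilon}$ passes over infinitely many later $\kappa$'s, and then the strongly compact Prikry forcing on $\mathcal{P}_{\kappa_{\epsilon}}(\lambda_{\epsilon})$ collapses all of those $\kappa$'s to $\kappa_{\epsilon}$, destroying exactly the cardinals the theorem needs to survive. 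Handling these ``type 2'' cardinals is the real content of the order-type-greater-than-$\omega$ case: the paper (following Gitik and \cite{Dim2011}) introduces a third forcing for them, the injective tree-Prikry forcing built from uniform $\kappa$-complete ultrafilters, and your construction has no counterpart for it.

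Second, your filter $F_{\delta}$ ``generated by the $\mathcal{U}_{\epsilon}$-projections onto $\delta$'' only makes sense when $\delta$ lies inside a block. The limit cardinals of $(\omega,\eta)$ in $\mathcal{N}$ --- the limits of the $\kappa_{\epsilon}$'s --- belong to no block and carry no projected measure; you name this as the main obstacle but leave it unresolved, and homogeneity of the automorphism action will not by itself produce the filter. The paper's argument for these cardinals is different in kind: it first proves that every intermediate model $V[G\restriction X]$ factors as a forcing of size $<\kappa$ followed by a forcing adding no bounded subsets of $\kappa$, then uses Levy--Solovay \cite{LS1967} to keep $\kappa$ a limit of measurables after the small part, and generates the Rowbottom filter from tails of the (lifted) normal measures $\mu_{i}$ on an $\omega$-sequence of measurables cofinal in $\kappa$, quoting the partition theorem of \cite{Kan2003} (Theorem 8.7). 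Only for the successor cardinals $\kappa_{\epsilon}$ does the paper run the appropriate-automorphism technique of \cite{AH1991} (Lemma 3.1), and there it is a genuine combinatorial argument --- a normalized push-forward measure on $\kappa_{\epsilon}$, the sets $X_{H,f}$, ``appropriate pairs'', and a claim about deciding values of the partition --- not a formal consequence of weak homogeneity, which is what your sketch appeals to. Both arguments would have to be supplied for your outline to become a proof. A lesser issue: the paper, like Gitik, uses finite-support products; with Easton support your conditions may carry infinitely many nontrivial stems, and the Prikry property of the product, which you invoke to keep bounded subsets out, is then no longer available.
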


\subsection{Extending a result of Dimitriou II} Inspired by a question of L\"{o}we, Dimitriou constructed a symmetric extension in [\cite{Dim2011}, \textbf{Chapter 2}, \textbf{$\S$3}] with a countable sequence of any desired pattern of regular cardinals and singular cardinals of cofinality $\omega$. We observe that if we replace the {\em injective tree Prikry forcing} by {\em strongly compact Prikry forcing} in the proof of [\cite{Dim2011}, \textbf{Theorem 2.12}] then we can obtain a countable sequence of any desired pattern of regular cardinals and singular Rowbottom cardinals of cofinality $\omega$. Here we also apply the appropriate automorphism technique.

\begin{thm}{\em Suppose there is an increasing sequence $\langle \kappa_{n}:0<n<\omega\rangle$ of strongly compact cardinals in a ground model $V$ of ZFC, which sequence has limit $\eta$. Then for any function $f:\omega\rightarrow 2$ in the ground model, there is a forcing extension $V[G]$ that has a symmetric inner model 
$\mathcal{N}$ in which the following hold.
\begin{enumerate}
    \item $\aleph_{n+1}$ is regular if $f(n)=1$ and singular if $f(n)=0$.
    \item Each singular cardinal in the obtained pattern of regular and singular cardinals, are almost Ramsey and carry a Rowbottom filter.
    \item Each regular cardinal in the obtained pattern of regular and singular cardinals, do not carry any unifrom ultrafilter.
\end{enumerate}
}
\end{thm}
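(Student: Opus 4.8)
The plan is to follow the proof of [\cite{Dim2011}, \textbf{Theorem 2.12}] step by step at the level of the symmetric-model bookkeeping, substituting strongly compact Prikry forcing for the injective tree Prikry forcing at exactly the coordinates that $f$ marks singular, and then to read the three conclusions off the fine measures. First I would fix, for each $n<\omega$ with $f(n)=0$, a fine measure $U_{n}$ on $P_{\kappa_{n+1}}(\lambda_{n})$ (for some $\lambda_{n}\ge\kappa_{n+1}$) witnessing the strong compactness of $\kappa_{n+1}$, and let $\mathbb{P}_{U_{n}}$ be the associated strongly compact Prikry forcing, whose conditions are pairs $(s,A)$ with $s$ a finite $\subseteq$-increasing stem from $P_{\kappa_{n+1}}(\lambda_{n})$ and $A\in U_{n}$, taken under the usual Prikry order. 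The main poset $\mathbb{P}$ would be a product over $n<\omega$ of blocks $\mathbb{Q}_{n}$: each $\mathbb{Q}_{n}$ L\'evy-collapses the cardinals below $\kappa_{n+1}$ down to the previously treated cardinal, so that $\langle\kappa_{n+1}:n<\omega\rangle$ is pushed onto the successive uncountable cardinals $\aleph_{1},\aleph_{2},\dots$, and, precisely when $f(n)=0$, it also adjoins a $\mathbb{P}_{U_{n}}$-generic sequence singularizing $\kappa_{n+1}$ to cofinality $\omega$; when $f(n)=1$ the block contains only collapses. Let $G$ be $\mathbb{P}$-generic over $V$.

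Next I would build the symmetric inner model $\mathcal{N}$ by the appropriate automorphism technique of [\cite{AH1991}, \textbf{Lemma 3.1}]. I would take the automorphism group $\mathcal{G}$ of $\mathbb{P}$ generated coordinatewise, with the automorphisms permuting the L\'evy-collapsing conditions and, at the singular coordinates, acting on the $\mathbb{P}_{U_{n}}$-data compatibly with fineness and with the $\subseteq$-structure of $P_{\kappa_{n+1}}(\lambda_{n})$. The normal filter $\mathcal{F}$ would be generated by the subgroups fixing pointwise all blocks below some finite level together with finitely much collapse/stem information, and $\mathcal{N}$ would be the class of hereditarily $\mathcal{F}$-symmetric names interpreted by $G$. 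As in Gitik's model \cite{Git1980}, the symmetry is arranged so that the intervals are collapsed and the Prikry $\omega$-sequences are visible in $\mathcal{N}$, while no well-ordering of the collapse data enters $\mathcal{N}$.

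For part (1) I would verify the cardinal structure, which is exactly that of [\cite{Dim2011}, \textbf{Theorem 2.12}] and is unaffected by the change of Prikry forcing. Each $\mathbb{P}_{U_{n}}$ has the Prikry property and so adds no bounded subsets of $\kappa_{n+1}$, while the collapses make $\kappa_{n+1}$ into $\aleph_{n+1}$ in $\mathcal{N}$. At a coordinate with $f(n)=0$ the cofinal $\omega$-sequence supplied by the strongly compact Prikry generic survives into $\mathcal{N}$, so $\aleph_{n+1}$ has cofinality $\omega$; at a coordinate with $f(n)=1$ only collapses occur and the symmetry keeps every cofinal $\omega$-sequence out of $\mathcal{N}$, so $\aleph_{n+1}$ stays regular.

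Finally, for parts (2) and (3) I would run the automorphism technique on colourings and on candidate ultrafilters. At a singular $\aleph_{n+1}$ the fine measure $U_{n}$ projects, along the Prikry sequence, to a filter $F_{n}$ on $\aleph_{n+1}$; the aim is to show, just as strong compactness improves on measurability in [\cite{ADK2016}], that $F_{n}$ is a Rowbottom filter in $\mathcal{N}$ and that $U_{n}$ produces, for every colouring in $\mathcal{N}$ and every target order type below $\aleph_{n+1}$, a homogeneous set, giving almost Ramseyness — this is exactly where strongly compact Prikry forcing buys more than the injective tree version. At a regular $\aleph_{n+1}$, a uniform ultrafilter cannot live in $\mathcal{N}$: a symmetric name for one has support below some finite level, so an automorphism in the pointwise stabilizer of that support fixes the ultrafilter yet swaps the two halves of a suitable partition of $\aleph_{n+1}$, reversing membership and contradicting that an ultrafilter contains exactly one half. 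The hard part will be part (2): checking inside the choiceless model $\mathcal{N}$ that $F_{n}$ is genuinely a Rowbottom filter and that the almost-Ramsey partition relations hold. This requires adapting the homogeneity arguments of [\cite{AH1991}, \textbf{Lemma 3.1}] to the $P_{\kappa_{n+1}}(\lambda_{n})$ setting, controlling how automorphisms act on a symmetric name for an arbitrary colouring so that a measure-one, hence symmetric, homogeneous set of the prescribed order type can be extracted while remaining in $\mathcal{N}$; the preservation of regular cardinals and their lack of uniform ultrafilters then carries over essentially unchanged from [\cite{Dim2011}, \textbf{Theorem 2.12}].
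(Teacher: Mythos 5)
There is a genuine gap, and it lies in your forcing arrangement rather than in the symmetric-model bookkeeping. You let every block L\'evy-collapse the interval below $\kappa_{n+1}$ onto the previously treated cardinal, so that collapses with conditions of size $<\kappa_{n+1}$ are attached to \emph{every} $\kappa_{n+1}$, and you add the strongly compact Prikry forcing $\mathbb{P}_{U_{n}}$, with $U_{n}$ on $\mathcal{P}_{\kappa_{n+1}}(\lambda_{n})$ for an unspecified $\lambda_{n}$, exactly when $f(n)=0$. Consequently, at a singular coordinate your poset contains a product of $\mathbb{P}_{U_{n}}$ with a collapse $Col(\kappa_{n+1},\beta)$ (conditions of size $<\kappa_{n+1}$, $\beta\in(\kappa_{n+1},\kappa_{n+2})$), both attached to the \emph{same} cardinal, and $\mathcal{N}$ must see fragments of both: the Prikry generic to singularize $\kappa_{n+1}$, the collapses to make $\kappa_{n+2}$ its successor. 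This product provably misbehaves. Since $Col(\kappa_{n+1},\beta)$ is $<\kappa_{n+1}$-closed in $V$, its generic yields a \emph{fresh} unbounded $A\subseteq\kappa_{n+1}$ (every initial segment in $V$, $A\notin V$) which, by a density argument over the collapse, splits every unbounded $V$-subset of $\kappa_{n+1}$; by mutual genericity of the product, the Prikry sequence $\langle \gamma_{m}:m<\omega\rangle$ (where $\gamma_{m}=P_{m}\cap\kappa_{n+1}$) can then be steered into and out of $A$ cofinally often, so the trace $X=\{m<\omega : \gamma_{m}\in A\}$ is a real lying in neither $V$ nor the collapse extension. Hence the Prikry factor fails the Prikry lemma over the collapse extension, the collapse factor is not distributive over the Prikry extension (Easton's lemma is unavailable: the Prikry forcing is only $(\cdot)^{+}$-c.c., the collapse only $<\kappa_{n+1}$-closed), and the fragments of your forcing no longer factor as (size $<\kappa$) $\times$ (adds no bounded subsets of $\kappa$). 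Every one of your three conclusions is proved through exactly that factorization — preservation of the $\kappa_{j}$, regularity at $f(n)=1$ coordinates, and the Rowbottom/almost Ramsey arguments all begin with ``any set of ordinals in $\mathcal{N}$ lies in a nicely factorable fragment'' — so the proof breaks at its foundation.

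The paper avoids this by design, and this is precisely the feature of strongly compact Prikry forcing your proposal misses: at an $f(n)=0$ coordinate the fine measure is taken on $\mathcal{P}_{\kappa_{n+1}}(\kappa_{n+2})$, i.e.\ $\lambda_{n}=\kappa_{n+2}$ is forced on you, so that the Prikry forcing \emph{itself} both singularizes $\kappa_{n+1}$ and collapses the whole interval $(\kappa_{n+1},\kappa_{n+2})$ onto it through its restricted fragments $\mathbb{P}_{\mathcal{U}\restriction\alpha}$ (c.f. [\cite{AH1991}, \textbf{Lemma 2.4}]); collapses $\{p:\kappa_{n+1}\rightharpoonup\alpha : \vert p\vert<\kappa_{n+1}\}$ appear \emph{only} at $f(n)=1$ coordinates, where no Prikry forcing touches $\kappa_{n+1}$. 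Thus a cardinal being singularized never has closed collapses attached to it, every fragment relevant to a given $\kappa_{j}$ is either small (so Levy--Solovay \cite{LS1967} applies) or adds no bounded subsets of $\kappa_{j}$, and the analogues of the paper's Lemmas 3.1--3.6 and 4.1--4.2 go through. Two further points. Your symmetry filter (``subgroups fixing all blocks below some finite level'') suggests initial-segment supports; that would put full L\'evy-collapse generics of size $\kappa_{n+1}$ into $\mathcal{N}$ and again destroy the smallness needed for Levy--Solovay — the paper's $\mathcal{I}$ consists of finite products of single-coordinate fragments. And for conclusion (3) the paper does not run a hands-on partition-swapping argument but quotes [\cite{KH2019}, \textbf{Theorem 2.4}]; note that in your sketch the ``partition'' to be swapped must itself be generic, since automorphisms of $\mathbb{P}$ fix all ground-model sets pointwise.
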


\subsection{Reducing the assumption of supercompactness by strong compactness} Apter and Cody (c.f. [\cite{AC2013}, \textbf{Theorem 1}]) obtained a symmetric inner model of a forcing extension where $\kappa$ and $\kappa^{+}$ are both singular, and there is a sequence of distinct subsets of $\kappa$ of length equal to any predefined ordinal, assuming a supercompact cardinal $\kappa$. They used the fact that it is possible to obtain a forcing extension where a supercompact cardinal $\kappa$ can become indestructible under $\kappa$-directed closed forcing notions\footnote{Using Laver's indestructibility of supercompactness.} and worked on a symmetric inner model of a forcing extension based on {\em supercompact Prikry forcing} to obtain the result. We observe that applying a recent result of Usuba (c.f. [\cite{ADU2019}, \textbf{Theorem 3.1}]), followed by working on a symmetric inner model of a forcing extension based on strongly compact Prikry forcing, it is possible to weaken the assumption of a supercompact cardinal $\kappa$ to a strongly compact cardinal $\kappa$.

\begin{obs}
{\em Suppose $\kappa$ is a strongly compact cardinal, GCH holds, and $\theta$ is an ordinal in a ground model $V$ of ZFC. Then there is a forcing extension $V[G]$ that has a symmetric inner model $\mathcal{N}$ in which the following hold.
\begin{enumerate}
    \item $\kappa$ and $\kappa^{+}$ are both singular with $(cf(\kappa))^{\mathcal{N}}=\omega$ and $(cf(\kappa^{+}))^{\mathcal{N}}<\kappa$.
    \item $\kappa$ is a strong limit cardinal that is a limit of inaccessible cardinals.
    \item There is a sequence of distinct subsets of $\kappa$ of length $\theta$.
    \item $AC_{\kappa}$ fails.
\end{enumerate}
}
\end{obs}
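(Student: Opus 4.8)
The plan is to transplant the construction of Apter and Cody [\cite{AC2013}, \textbf{Theorem 1}] to the strongly compact setting, replacing its two appeals to supercompactness by strongly compact analogues. The two substitutions are: Laver's indestructibility of supercompactness is replaced by Usuba's indestructibility theorem [\cite{ADU2019}, \textbf{Theorem 3.1}] for strongly compact cardinals, and supercompact Prikry forcing is replaced by strongly compact Prikry forcing, which requires only a fine measure on $P_{\kappa}(\lambda)$ rather than a normal fine (supercompactness) measure. First I would apply [\cite{ADU2019}, \textbf{Theorem 3.1}] to force from $V$ to an extension $V_{1}$ in which $\kappa$ is strongly compact and its strong compactness is indestructible under $\kappa$-directed closed forcing, arranging that GCH is preserved at and above $\kappa$.

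Over $V_{1}$ I would then force with the $\kappa$-directed closed poset $\mathbb{P}=\mathrm{Add}(\kappa,\theta)$, obtaining $V_{2}=V_{1}[H]$ together with a $\theta$-sequence $\langle A_{\alpha}:\alpha<\theta\rangle$ of distinct subsets of $\kappa$. By indestructibility $\kappa$ remains strongly compact in $V_{2}$, and the $<\kappa$-closure of $\mathbb{P}$ together with GCH preserves cardinals, keeps $\kappa$ a strong limit (no new bounded subsets of $\kappa$ are added), and preserves the inaccessibles below $\kappa$. In $V_{2}$ I would next fix a fine measure $U$ on $P_{\kappa}(\kappa^{+})$ and force with the associated strongly compact Prikry forcing $\dot{\mathbb{Q}}_{U}$ to obtain $V[G]=V_{2}[G_{1}]$: the generic sequence singularizes $\kappa$ to cofinality $\omega$, the measure on $P_{\kappa}(\kappa^{+})$ supplies the mechanism that will render $\kappa^{+}$ singular of cofinality $<\kappa$, and the Prikry property guarantees that no new bounded subsets of $\kappa$ are added, so that item (2) and the subsets $A_{\alpha}$ are both preserved.

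Finally, via the appropriate automorphism technique I would introduce an automorphism group $\mathcal{G}$ acting on $\mathbb{P}\ast\dot{\mathbb{Q}}_{U}$ together with a normal filter $\mathcal{F}$ of subgroups, and set $\mathcal{N}=\mathrm{HS}$. The symmetry should be arranged so that the automorphisms fix the coordinates carrying $\langle A_{\alpha}:\alpha<\theta\rangle$, placing this sequence of distinct subsets in $\mathcal{N}$ (item (3)), while acting on the Prikry data so that a suitable $\kappa$-indexed family of generic objects admits no choice function in $\mathcal{N}$ (so $AC_{\kappa}$ fails, item (4)). The Prikry property and the symmetry together should keep $\kappa$ and $(\kappa^{+})^{V}$ as cardinals of $\mathcal{N}$ while only lowering their cofinalities to $\omega$ and to some value $<\kappa$ respectively (item (1)); in particular, although $(\kappa^{+})^{V}$ may be collapsed in the full extension $V[G]$, the collapsing map should be excluded from $\mathcal{N}$ so that it survives there as a singular cardinal.

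The crux is the Prikry stage. I expect the main obstacle to be verifying that strongly compact Prikry forcing with a merely fine measure $U$ — lacking the normality available in [\cite{AC2013}] — still possesses the Prikry property and simultaneously singularizes $\kappa$ (to cofinality $\omega$) and $\kappa^{+}$ (to cofinality $<\kappa$), and that $\mathbb{P}\ast\dot{\mathbb{Q}}_{U}$ factors compatibly with $\mathcal{G}$ so that $AC_{\kappa}$ fails in $\mathcal{N}$ while the $\theta$-sequence of subsets is retained and $(\kappa^{+})^{V}$ is preserved as a singular cardinal. A secondary point to check is that Usuba's indestructibility is robust enough to survive the preparatory forcing $\mathrm{Add}(\kappa,\theta)$.
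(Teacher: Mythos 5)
Your overall skeleton (Usuba's theorem to make $2^{\kappa}=\theta$ while keeping $\kappa$ strongly compact, then strongly compact Prikry forcing, then a symmetric inner model) matches the paper's, but there is a genuine gap at the decisive step: the choice of the measure space for the Prikry stage. You force with a fine measure on $\mathcal{P}_{\kappa}(\kappa^{+})$, where $(\kappa^{+})^{V}$ is regular, and then hope that the symmetric model keeps a short cofinal sequence in $(\kappa^{+})^{V}$ (making it singular) while excluding the map collapsing it to $\kappa$. No mechanism for this is given, and none is available in this framework: in the Apter--Henle style model $\mathcal{N}$, every set of ordinals lies in some intermediate extension $V[r\restriction\delta]$ with $\delta\in[\kappa,\lambda)$ inaccessible, and under GCH each such extension arises from a $\delta^{+}$-c.c.\ forcing, so cardinals and cofinalities at and above $\lambda$ are preserved in $\mathcal{N}$. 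With your choice $\lambda=\kappa^{+}$ the only relevant intermediate extension is essentially $V[r\restriction\kappa]$, so $(\kappa^{+})^{V}$ stays regular in $\mathcal{N}$; if instead you enlarge $\mathcal{N}$ to include an $\omega$-sequence from the Prikry generic cofinal in $(\kappa^{+})^{V}$, you have no argument that $(\kappa^{+})^{V}$ survives as a cardinal. Producing ``singularized but not collapsed'' for a $V$-regular cardinal out of a merely fine measure is exactly the obstacle you flag as the crux, and it is not resolved.

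The paper (following \cite{AC2013} and \cite{AH1991}) avoids this issue entirely: it never singularizes $(\kappa^{+})^{V}$. One fixes $\lambda>\kappa$ that is \emph{already singular in $V$} with $(cf(\lambda))^{V}<\kappa$, forces with strongly compact Prikry forcing on $\mathcal{P}_{\kappa}(\lambda)$, and takes $\mathcal{N}$ to be the least ZF model containing $r\restriction\delta$ for each inaccessible $\delta\in[\kappa,\lambda)$ but not the $\lambda$-sequence of these restrictions. In $\mathcal{N}$ every $V$-cardinal in $(\kappa,\lambda)$ collapses to $\kappa$, so $(\kappa^{+})^{\mathcal{N}}=\lambda$, and $cf(\kappa^{+})^{\mathcal{N}}=cf(\lambda)^{V}<\kappa$ is simply inherited from the ground model; the failure of $AC_{\kappa}$ then follows from $cf(\kappa^{+})^{\mathcal{N}}<\kappa$, and the $\theta$-sequence of distinct subsets of $\kappa$ is in $\mathcal{N}$ because it already lies in $V\subseteq\mathcal{N}$ --- no automorphisms are needed to ``protect'' it. A secondary inaccuracy: [\cite{ADU2019}, \textbf{Theorem 3.1}] is not a Laver-style indestructibility theorem for $\kappa$-directed closed forcings (no such theorem for strong compactness is known); it is a preservation result allowing one to realize $2^{\kappa}=\theta$ while keeping $\kappa$ strongly compact, which is precisely how the paper invokes it. Since you only apply it to $\mathrm{Add}(\kappa,\theta)$, that part of your argument is repairable, but the Prikry stage cannot be repaired without replacing $\kappa^{+}$ by a $V$-singular $\lambda$ of cofinality below $\kappa$.
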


\subsection{Strong compactness and normal measures} Apter \cite{Apt2006} proved that $\aleph_{\omega+1}$ can carry $\geq\aleph_{\omega+2}$ number of normal measures in ZF. The methods of \cite{Apt2006} easily generalizes to handling successors of other singular cardinals of cofinality $\omega$. Thus it is known that a successor $\kappa^{+}$ of a singular cardinal $\kappa$ of cofinality $\omega$ can carry $\geq \kappa^{++}$ number of normal measures in ZF.
Recently, Goldberg \cite{Gol2018} introduced the Ultrapower axiom (UA). 
Assuming UA, Apter proved that if $\lambda$ is a measurable cardinal such that the order of $\lambda$ is $\delta$, then
the number of normal measures $\lambda$ carries is $\vert\delta\vert$ (c.f. [\cite{Apt2020}, \textbf{Proposition 1}]). Applying this result we can construct a symmetric inner model based on strongly compact Prikry forcing where the successor of a singular cardinal of cofinality $\omega$, can carry arbitrary (regular cardinal) number of normal measures under certain large cardinal assumptions. 

\begin{remark}{\em Let $V$ be a model of ZFC + GCH+ UA. In $V$, let $\kappa<\lambda$ are such that $\kappa$ is strongly compact and $\lambda$ is the least measurable cardinal above $\kappa$ such that $o(\lambda)=\delta$ for some ordinal $\delta\leq \lambda^{++}$. Then there is a forcing extension $V[G]$ that has a symmetric inner model $\mathcal{N}$  where $cf(\kappa)=\omega$ and $\kappa^{+}$ can carry $\vert\delta\vert^{\mathcal{N}}$ number of normal measures.}\end{remark}

It follows from the above Remark that if $1 \leq \delta < \omega$, then a successor of a singular cardinal of countable cofinality will carry a precise finite number of normal measures (e.g., 3, 97, 4962, etc.) in $\mathcal{N}$. Further, if $\delta = \omega, \omega_{3}$, or $\omega_{297}$, then a successor of a singular cardinal of countable cofinality will carry exactly $\aleph_{0}$, $\aleph_{3}$, or $\aleph_{297}$ normal measures respectively in $\mathcal{N}$. 
\section{Basics}
\subsection{Large Cardinals}
We recall the definition of inaccessible cardinals in the context of ZFC and other large cardinals in the context of ZF. In ZFC, we say $\kappa$ is a strongly inaccessible cardinal if it is a regular strong limit cardinal where the definition of ``strong limit" is that for all $\alpha<\kappa$, we have $2^{\alpha}<\kappa$. 
We recall the other necessary large cardinal definitions in the context of ZF from {\em `The Higher Infinite'} \cite{Kan2003} of Akihiro Kanamori.

\begin{defn} Given an uncountable cardinal $\kappa$, we recall the following definitions.
\begin{enumerate}
\item $\kappa$ is almost Ramsey if for all $\alpha<\kappa$ and $f:[\kappa]^{<\omega}\rightarrow 2$, there is a homogeneous set $X\subseteq \kappa$ for $f$ having order type $\alpha$.

\item  $\kappa$ is $\mu$-Rowbottom if for all $\alpha<\kappa$ and $f:[\kappa]^{<\omega}\rightarrow \alpha$, there is a homogeneous set $X\subseteq\kappa$ for $f$ of order type $\kappa$ such that $\vert f^{''}[X]^{<\omega}\vert<\mu$. $\kappa$ is Rowbottom if it is $\omega_{1}$-Rowbottom. A filter $\mathcal{F}$ on $\kappa$ is a Rowbottom filter on $\kappa$ if for any $f:[\kappa]^{<\omega}\rightarrow\lambda$, where $\lambda<\kappa$ there is a set $X\in\mathcal{F}$ such that $\vert f^{''}[X]^{<\omega}\vert\leq\omega$.

\item $\kappa$ is measurable if there is a $\kappa$-complete free ultrafilter on $\kappa$. 
A filter $\mathcal{F}$ on a cardinal $\kappa$ is normal if it is closed under diagonal intersections:

\begin{center}
    If $X_{\alpha}\in \mathcal{F}$ for all $\alpha<\kappa$, then $\Delta_{\alpha<\kappa}X_{\alpha}\in \mathcal{F}$. 
\end{center}

In ZF we have the following lemma. 

\begin{lem}{(c.f. [\cite{Dim2011}, \textbf{Lemma 0.8}]).}
{\em An ultrafilter $\mathcal{U}$ over $\kappa$ is normal if and only if for every regressive $f:\kappa\rightarrow\kappa$ there is an $X\in \mathcal{U}$ such that $f$ is constant on $X$.} 
\end{lem}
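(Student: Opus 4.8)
The plan is to prove the two implications of the biconditional separately, taking care to keep every auxiliary function explicitly definable so that no appeal to choice is made (the lemma is stated in ZF). Throughout I read ``regressive'' as $f(\xi)<\xi$ for all $\xi$ with $0<\xi<\kappa$, and I use only the ultrafilter properties that $\emptyset\notin\mathcal{U}$, that $\mathcal{U}$ is closed under finite intersections, and that for every $Y\subseteq\kappa$ exactly one of $Y$, $\kappa\setminus Y$ lies in $\mathcal{U}$. Since a principal ultrafilter satisfies both sides of the equivalence trivially (any regressive function is constant on the generating singleton, and that singleton already sits in every diagonal intersection of members of $\mathcal{U}$), I may assume $\mathcal{U}$ is free, so that $\mathcal{U}$ contains no singleton.

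For the forward direction I assume $\mathcal{U}$ is normal and fix a regressive $f$, arguing by contradiction that $f$ is constant on no member of $\mathcal{U}$. Then for each $\gamma<\kappa$ the set $\{\xi:f(\xi)=\gamma\}$ is not in $\mathcal{U}$, so $X_{\gamma}:=\{\xi:f(\xi)\neq\gamma\}\in\mathcal{U}$ by the ultrafilter property. I then form the diagonal intersection $\Delta_{\gamma<\kappa}X_{\gamma}$, which belongs to $\mathcal{U}$ by normality, and observe that any $\xi\neq 0$ in it would simultaneously satisfy $f(\xi)<\xi$ and $f(\xi)\neq\gamma$ for every $\gamma<\xi$, which is impossible. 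Hence $\Delta_{\gamma<\kappa}X_{\gamma}\subseteq\{0\}$, and since $\emptyset\notin\mathcal{U}$ this forces $\{0\}\in\mathcal{U}$, contradicting freeness. Therefore $f$ is constant on some set in $\mathcal{U}$.

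For the converse I assume every regressive function is constant on a member of $\mathcal{U}$ and take members $\langle X_{\alpha}:\alpha<\kappa\rangle$ of $\mathcal{U}$ with $D=\Delta_{\alpha<\kappa}X_{\alpha}$. Supposing $D\notin\mathcal{U}$, its complement $\kappa\setminus D=\{\xi:\exists\alpha<\xi\,(\xi\notin X_{\alpha})\}$ lies in $\mathcal{U}$. The key step is to define $f$ canonically, sending each such $\xi$ to the \emph{least} $\alpha<\xi$ witnessing $\xi\notin X_{\alpha}$ (and to $0$ elsewhere); this $f$ is regressive, and choosing the least witness keeps the construction inside ZF. By hypothesis $f$ is constant with some value $\beta$ on a set $Y\in\mathcal{U}$; intersecting, $Y':=Y\cap(\kappa\setminus D)\in\mathcal{U}$ is a set on which $f\equiv\beta$ and every point lies outside $X_{\beta}$ by the definition of $f$. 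Thus $Y'\cap X_{\beta}=\emptyset$ while $Y',X_{\beta}\in\mathcal{U}$, contradicting $\emptyset\notin\mathcal{U}$. Hence $D\in\mathcal{U}$ and $\mathcal{U}$ is normal.

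There is no deep obstacle here; the two points requiring attention are exactly the choice-free bookkeeping items highlighted above. In the forward direction one must notice that the offending diagonal intersection collapses to at most $\{0\}$ and then invoke freeness of $\mathcal{U}$ to discard it, while in the converse one must pick the witnessing ordinal canonically as the least one so that the regressive function is produced without any selection principle. Everything else is the standard manipulation of ultrafilters and diagonal intersections.
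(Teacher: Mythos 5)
Your proof is correct: both directions are the standard ZF argument (in one direction, complement the fibers of $f$ and diagonally intersect, observing the result collapses to $\{0\}$; in the other, build the regressive function from least witnesses to membership in the complement of the diagonal intersection), and your canonical least-witness choices keep everything choice-free, as required. The paper itself offers no proof of this lemma --- it is quoted as background from [\cite{Dim2011}, \textbf{Lemma 0.8}] --- and your argument is exactly the standard proof behind that citation, so it matches the intended one and nothing further needs to be said.
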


Thus, we say an ultrafilter $\mathcal{U}$ over $\kappa$ is normal if for every regressive $f:\kappa\rightarrow\kappa$ there is an $X\in \mathcal{U}$ such that $f$ is constant on $X$.

\item For a set $A$ we say $\mathcal{U}$ a fine measure on $\mathcal{P}_{\kappa}(A)$ if $\mathcal{U}$ is a $\kappa$-complete ultrafilter and for any $i\in A$, $\{x\in\mathcal{P}_{\kappa}(A): i\in x\}\in\mathcal{U}$. We say $\mathcal{U}$ is a normal measure on $\mathcal{P}_{\kappa}(A)$, if $\mathcal{U}$ is a fine measure and if $f:\mathcal{P}_{\kappa}(A)\rightarrow A$ is such that $f(X)\in X$ for a set in $\mathcal{U}$, then $f$ is constant on a set in $\mathcal{U}$. $\kappa$ is $\lambda$-strongly compact if there is a fine measure on $ \mathcal{P}_{\kappa}(\lambda)$, it is strongly compact if it is $\lambda$-strongly compact for all $\kappa\leq\lambda$.

\item $\kappa$ is $\lambda$-supercompact if there is a normal measure on $ \mathcal{P}_{\kappa}(\lambda)$, it is supercompact if it is $\lambda$-supercompact for all $\kappa\leq\lambda$.
\end{enumerate}
\end{defn}

\textbf{Remark 1.} We note that the definition of supercompact (similarly strongly compact) is meant in the terms of ultrafilters, which is weaker than the definition of supercompact in terms of elementary embedding due to Woodin \textbf{[\cite{Wood2010}, Definition 220]} (e.g. $\aleph_{1}$ can be supercompact or strongly compact if we consider the definition of supercompact or strongly compact in terms of ultrafilters, but $\aleph_{1}$ can not be the critical
point of an elementary embedding). 

\textbf{Remark 2.} Ikegami and Trang [$\S$2, \cite{IT2019}] defined that an ultrafilter $\mathcal{U}$ on $\mathcal{P}_{\kappa}(X)$ is normal if for any set $A \in \mathcal{U}$ and $f : A \rightarrow \mathcal{P}_{\kappa}(X)$
with $\emptyset \not= f(\sigma) \subseteq \sigma$ for all $\sigma\in A$, there is an $x_{0} \in X$ such that for $\mathcal{U}$-measure one many $\sigma$ in $A$, $x_{0}\in f(\sigma)$. They note that their definition of normality is equivalent to the closure under diagonal intersections in ZF, while it may not be equivalent to the definition of normality in our sense without AC.

From now on, all our inaccessible cardinals are strongly inaccessible.


\subsection{Homogeneity of forcing notions} We recall the definition of {\em weakly homogeneous} and {\em cone homogeneous} forcing notions from \cite{DF2008} (c.f. [\cite{DF2008}, \textbf{Definition 2}]).

\begin{defn}
{\em Let $\mathbb{P}$ be a set forcing notion.
\begin{itemize}
    \item We say $\mathbb{P}$ is weakly homogeneous if for any $p,q\in \mathbb{P}$, there is an automorphism $a:\mathbb{P}\rightarrow\mathbb{P}$ such that $a(p)$ and $q$ are compatible.
    \item For $p\in\mathbb{P}$, let $Cone(p)$ denote $\{r\in\mathbb{P}:r\leq p\}$. We say $\mathbb{P}$ is cone homogeneous if and only if for any $p, q\in \mathbb{P}$, there exist $p'\leq p$, $q'\leq q,$ and an isomorphism $\pi: Cone(p')\rightarrow Cone(q')$.
\end{itemize}
}
\end{defn}

Following [\textbf{\cite{DF2008}, Fact 1}], if $\mathbb{P}$ is a weakly homogeneous forcing notion, then it is cone homogeneous too. Also, the finite support products of weakly (cone) homogeneous forcing notions are weakly (cone) homogeneous. A crucial feature of symmetric inner models of forcing extensions based on weakly (cone) homogeneous forcings are that they can be approximated by certain intermediate submodel where AC holds (c.f. [\textbf{\cite{Dim2011}, Lemma 1.29}]).

\subsection{Strongly compact Prikry forcing}
Suppose $\lambda>\kappa$ and $\kappa$ be a $\lambda$-strongly compact cardinal in the ground model $V$. Let $\mathcal{U}$ be a $\kappa$-complete fine ultrafilter over $\mathcal{P}_{\kappa}(\lambda)$. 

\begin{defn}{(c.f. [\cite{Git2010}, \textbf{Definition 1.51}]).}
{\em A set $T$ is called a $\mathcal{U}$-tree with trunk t if and only if the following holds.
\begin{enumerate}
    \item $T$ consists of finite sequences $\langle P_{1},...,P_{n}\rangle$ of elements of $\mathcal{P}_{\kappa}(\lambda)$ so that $P_{1}\subseteq P_{2}\subseteq ... P_{n}$.
    \item $\langle T, \unlhd\rangle$ is a tree, where $\unlhd$ is the order of the end extension of finite sequences.
    \item t is a trunk of $T$, i.e., $t\in T$ and for every $\eta\in T$, $\eta \unlhd t$ or $t \unlhd \eta$.
    \item For every $t\unlhd \eta$, $Suc_{T} (\eta)=\{Q\in \mathcal{P}_{\kappa}(\lambda): \eta \frown\langle Q\rangle\in T\}\in \mathcal{U}$.
\end{enumerate}
}
\end{defn}
The set $\mathbb{P}_{\mathcal{U}}$ consists of all pairs $\langle t,T\rangle$ such that $T$ is a
$\mathcal{U}$-tree with trunk $t$. If $\langle t,T\rangle, \langle s,S\rangle\in\mathbb{P}_{\mathcal{U}}$, we say that $\langle t,T\rangle$ is stronger than
$\langle s,S\rangle$, and denote this by $\langle t,T\rangle\geq\langle s,S\rangle$, if and only if $T \subseteq S$. We call $\mathbb{P}_{\mathcal{U}}$ with the ordering defined above as strongly compact Prikry forcing with respect to $\mathcal{U}$.
Let $G$ be $V$-generic over $\mathbb{P}_{\mathcal{U}}$.\footnote{Alternatively, we also recall the definition of a strongly compact Prikry forcing $\mathbb{P}_{\mathcal{U}}$ from \cite{AH1991}. Let $\mathcal{U}$ be a fine measure on $\mathcal{P}_{\kappa}(\lambda)$ and $\mathcal{F}=\{f: f$ is a function from $[\mathcal{P}_{\kappa}(\lambda)]^{<\omega}$ to $\mathcal{U}\}$.  In particular, $\mathbb{P}_{\mathcal{U}}$ is the set of all finite sequences of the form $\langle p_{1},...p_{n},f\rangle$ satisfying the following properties.

\begin{itemize}
    \item $\langle p_{1},...p_{n}\rangle\in [\mathcal{P}_{\kappa}(\lambda)]^{<\omega}$.
    \item for $0\leq i<j\leq n$, $p_{i}\cap \kappa\not= p_{j}\cap \kappa$.
    \item $f\in \mathcal{F}$.
\end{itemize}

The ordering on $\mathbb{P}_{\mathcal{U}}$ is given by $\langle q_{1},...q_{m},g\rangle \leq \langle p_{1},...,p_{n},f\rangle$ if and only if we have the following.

\begin{itemize}
    \item $n\leq m$.
    \item $\langle p_{1},...,p_{n}\rangle$ is the initial segment of $\langle q_{1},...,q_{m}\rangle$.
    \item For $i=n+1,..., m$, $q_{i}\in f(\langle p_{1},...,p_{n}, q_{n+1},...,q_{i-1}\rangle)$.
    \item For $\overrightarrow{s}\in [\mathcal{P}_{\kappa}(\lambda)]^{<\omega}$, $g(\overrightarrow{s})\subseteq f(\overrightarrow{s})$.
\end{itemize}

For any regular $\delta\in [\kappa,\lambda]$, we denote $r\restriction{\delta}=\{\langle p_{0}\cap \delta,...p_{n}\cap\delta\rangle: \exists f\in \mathcal{F} \left[\langle p_{0},...p_{n},f\rangle\in G\right]\}$. In $V[r\restriction \kappa]\subseteq V[G]$, $\kappa$ is a singular cardinal having cofinality $\omega$. Since any two conditions having the same stems are compatible, i.e. any two conditions of the form $\langle p_{1},...,p_{n},f\rangle$ and $\langle p_{1},...,p_{n},g \rangle$ are compatible., \textbf{$\mathbb{P}_{\mathcal{U}}$ is $(\lambda^{<\kappa})^{+}$-c.c.}
}
Following a Prikry like lemma (c.f. [\cite{Git2010}, \textbf{Theorem 1.52}] $\&$ [\cite{AH1991}, \textbf{Lemma 1.1}]), $\mathbb{P}_{\mathcal{U}}$ does not add bounded subsets to $\kappa$.
Also, $(\lambda)^{V}$ is collapsed to $\kappa$ in $V[G]$. Again, $\mathbb{P}_{\mathcal{U}}$ is $(\lambda^{<\kappa})^{+}$-c.c. Let $\delta\in[\kappa,\lambda)$ be an inaccessible cardinal. If $x\subseteq\mathcal{P}_{\kappa}(\lambda)$, let $x\restriction\delta=\{Z\cap\delta:Z\in x\}$ and $\mathcal{U}\restriction\delta=\{x\restriction\delta: x\in\mathcal{U}\}$. Since, $\mathcal{U}$ is a $\kappa$-complete, fine ultrafilter on $\mathcal{P}_{\kappa}(\lambda)$, $\mathcal{U}\restriction\delta$ is a $\kappa$-complete, fine ultrafilter on $\mathcal{P}_{\kappa}(\delta)$. Consequently, we can consider the strongly compact Prikry forcing $\mathbb{P}_{\mathcal{U}\restriction\delta}$ like $\mathbb{P}_{\mathcal{U}}$.
\subsection{Injective tree-Prikry forcing} We recall the basics of injective tree-Prikry forcing from  [\cite{Dim2011}, \textbf{Chapter 2}, $\S$1]. Let $\kappa$ be a measurable cardinal, let $\alpha \geq \kappa$ be a regular cardinal, and $\phi$ a uniform $\kappa$-complete ultrafilter over $\alpha$. 

\begin{defn}{(c.f. [\cite{Dim2011}, \textbf{Definition 2.1}]).}
{\em A set $T \subseteq ^{<\omega}\alpha$ is called an injective $\phi$-tree if and only if the following hold.
\begin{enumerate}
\item $T$ consists of finite injective sequences of elements of $\alpha$,
\item $T$ is a tree with respect to end extension ``$\unlhd$'',
\item $T$ has a trunk, i.e., an element denoted by $tr_{T}$ , that is maximal in $T$ such that for every $t \in T$, $t \unlhd tr_{T}$ or $tr_{T} \unlhd t$, and
\item for every $t \in T$ with $tr_{T}\unlhd t$,
the set $Suc_{T}(t)= \{\beta \in \alpha: t\frown \langle\beta\rangle\in T\}\in \phi$.
\end{enumerate}}
\end{defn}
The set $P^{t}_{\phi}$ consists of all injective $\phi$-trees, and it is ordered by direct inclusion, i.e., $T \leq S$ if and only if $T \subseteq S$. We call the set $P^{t}_{\phi}$ together with the ordering defined above as the injective tree-Prikry forcing with respect to the ultrafilter $\phi$. Let $G$ be a $P^{t}_{\phi}$-generic filter over $V$. In $V[G]$, the cardinals between $\kappa$ and $\alpha^{+}$ collapse (c.f. \cite{Dim2011}).

\begin{lem}{(c.f. [\cite{Dim2011}, \textbf{Lemma 2.2}]).} {\em $P^{t}_{\phi}$ does not add bounded subsets to $\kappa$ and has the $\alpha^{+}-c.c$.}
\end{lem}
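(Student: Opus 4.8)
The plan is to establish the two assertions separately, handling the chain condition first since it is the more elementary of the two, and then deriving that no new bounded subsets of $\kappa$ appear from a Prikry-style density argument combined with a completeness-based fusion.

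For the $\alpha^{+}$-chain condition, the key observation I would isolate is that any two conditions sharing the same trunk are compatible. Given injective $\phi$-trees $T,S$ with $tr_{T}=tr_{S}=t$, I would take their intersection $R=T\cap S$ and check that it is again an injective $\phi$-tree with trunk $t$: it is plainly a tree of finite injective sequences, $t\in R$, and for every $\eta\in R$ with $t\unlhd\eta$ one has $Suc_{R}(\eta)=Suc_{T}(\eta)\cap Suc_{S}(\eta)\in\phi$ because $\phi$, being a filter, is closed under intersection; since $\phi$ is a uniform ultrafilter over $\alpha$ each such $Suc_{R}(\eta)$ has cardinality $\alpha$, so $t$ remains the branching trunk of $R$. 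Then $R\leq T$ and $R\leq S$, witnessing compatibility. Consequently every antichain consists of conditions with pairwise distinct trunks, and since a trunk is a finite injective sequence from $\alpha$ there are at most $\alpha^{<\omega}=\alpha$ possible trunks; hence every antichain has size at most $\alpha$, giving the $\alpha^{+}$-c.c.

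For the statement that $P^{t}_{\phi}$ adds no bounded subset of $\kappa$, I would proceed in two steps. The first is the Prikry property: for every condition $T$ with trunk $t$ and every sentence $\varphi$ of the forcing language there is $S\leq T$ with $tr_{S}=t$ that decides $\varphi$. I would prove this by assigning to each node $\eta$ of $T$ above $t$ a decision value recording whether some $\phi$-subtree of $T$ with trunk $\eta$ forces $\varphi$, forces $\neg\varphi$, or neither, and then, using that $\phi$ is an ultrafilter, thinning $Suc_{T}(\eta)$ at every node to a set in $\phi$ on which this value is constant; a combinatorial induction on the homogenized tree then shows that the resulting condition, with trunk $t$, in fact decides $\varphi$. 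The second step is a fusion furnished by the $\kappa$-completeness of $\phi$: given fewer than $\kappa$ many conditions all with the same trunk $t$, their intersection is again a condition with trunk $t$, since at each node the relevant successor set is an intersection of fewer than $\kappa$ members of $\phi$ and hence lies in $\phi$.

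With these two tools in hand I would finish as follows. Let $\dot a$ be a name and $T$ a condition forcing $\dot a\subseteq\check\gamma$ for some $\gamma<\kappa$. For each $\xi<\gamma$ the Prikry property yields $T_{\xi}\leq T$ with trunk $t=tr_{T}$ deciding $\check\xi\in\dot a$; the $\kappa$-completeness fusion then produces a single $T^{*}\leq T$ with trunk $t$ refining all the $T_{\xi}$, and this $T^{*}$ decides $\check\xi\in\dot a$ for every $\xi<\gamma$ at once, so it forces $\dot a$ to equal a fixed ground-model subset of $\gamma$. As such conditions are dense below any given condition, genericity gives $\dot a^{G}\in V$. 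I expect the main obstacle to be the Prikry property itself, and specifically the combinatorial verification that the homogenized tree decides $\varphi$: the varying trunks of the potential extensions $S$ make the bookkeeping delicate, and this is the step where the ultrafilter property of $\phi$ must be exploited most carefully; by contrast, the chain condition and the fusion are comparatively routine.
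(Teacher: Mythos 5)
Your proof is correct and takes essentially the same route as the source the paper cites for this lemma (Dimitriou's thesis, Lemma 2.2, which the paper itself does not reprove): the $\alpha^{+}$-c.c.\ via compatibility of same-trunk conditions under intersection together with the count of at most $\alpha$ many trunks, and the preservation of bounded subsets of $\kappa$ via the Prikry property (ultrafilter homogenization of successor sets) combined with the $\kappa$-completeness of $\phi$ to decide all $\gamma<\kappa$ many membership statements below a single direct extension. No gaps worth noting.
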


\begin{lem}{(c.f. [\cite{Dim2011}, \textbf{Lemma 2.3}]).}
{\em In $V[G]$, $cf(\alpha)=\omega$.}
\end{lem}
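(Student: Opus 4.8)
The plan is to show that the generic filter $G$ singles out a cofinal $\omega$-sequence in $\alpha$, which is the classical Prikry-sequence strategy adapted to injective tree-Prikry forcing. First I would define the generic sequence as $c=\bigcup_{T\in G}tr_{T}$. Since the order on $P^{t}_{\phi}$ is reverse inclusion, $S\leq T$ gives $S\subseteq T$ and hence $tr_{T}\unlhd tr_{S}$; thus the trunks of the conditions in $G$ form a chain under end extension, and their union $c=\langle x_{n}:n<\operatorname{len}(c)\rangle$ is a well-defined injective sequence of elements of $\alpha$. The goal is to prove that $\{x_{n}\}$ is unbounded in $\alpha$ and that $\operatorname{len}(c)=\omega$.

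The heart of the argument is a density computation. Fix $\beta<\alpha$ and let $D_{\beta}$ be the set of conditions whose trunk has an entry exceeding $\beta$; I claim $D_{\beta}$ is dense. Given any $T$ with trunk $t=tr_{T}$, the fourth clause of the definition of an injective $\phi$-tree gives $Suc_{T}(t)\in\phi$. Because $\phi$ is \emph{uniform} on the regular cardinal $\alpha$, every tail set $\{\gamma<\alpha:\gamma>\beta\}$ belongs to $\phi$: its complement is the initial segment $\{\gamma:\gamma\leq\beta\}$, of cardinality $<\alpha$, which cannot lie in a uniform ultrafilter. As $\phi$ is a filter, $Suc_{T}(t)\cap\{\gamma>\beta\}\in\phi$ is nonempty, so I may choose $\gamma>\beta$ with $t^{\frown}\langle\gamma\rangle\in T$. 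Letting $S$ be the subtree of all nodes of $T$ comparable with $t^{\frown}\langle\gamma\rangle$, one checks directly that $S$ is again an injective $\phi$-tree with trunk $t^{\frown}\langle\gamma\rangle$ — downward closure and the tree property are inherited, and above the new trunk $S$ and $T$ coincide, so the successor sets are unchanged and remain in $\phi$. Then $S\subseteq T$ yields $S\leq T$ with $S\in D_{\beta}$, establishing density.

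By genericity $G\cap D_{\beta}\neq\emptyset$ for every $\beta<\alpha$, so some entry of $c$ exceeds each $\beta$; hence $c$ is cofinal in $\alpha$. In particular $c$ cannot be finite, for a finite $c$ would have a largest entry $x$, and meeting $D_{x}$ would produce a strictly larger entry — a contradiction; therefore $\operatorname{len}(c)=\omega$. Finally, $\alpha$ is regular and $\geq\kappa>\omega$ in $V$, so it is a limit ordinal admitting no cofinal sequence of finite length, and the cofinal $\omega$-sequence $c$ then witnesses $cf(\alpha)=\omega$ in $V[G]$. I expect the only point requiring care to be the verification that the truncated tree $S$ still satisfies all four defining clauses, and it is worth emphasizing that no further large-cardinal hypothesis enters here: beyond $\phi$ being an ultrafilter, only its uniformity — which supplies the tail sets — is used, while $\kappa$-completeness is reserved for the Prikry property recorded in the companion lemma.
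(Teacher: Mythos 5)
Your proof is correct and is essentially the standard argument: the paper itself does not prove this lemma but only cites [Dim2011, Lemma 2.3], and the proof there is exactly your strategy of taking the union of the trunks of conditions in $G$ and showing, via the density of the sets $D_{\beta}$ (using uniformity of $\phi$ to get the tail sets into the ultrafilter), that this union is an injective $\omega$-sequence cofinal in $\alpha$. The only cosmetic slip is calling the ordering ``reverse inclusion'' where the paper says $T\leq S$ iff $T\subseteq S$; your actual usage ($S\leq T$ gives $S\subseteq T$, hence $tr_{T}\unlhd tr_{S}$) is consistent with the paper's convention, and your closing remark that only uniformity, not $\kappa$-completeness, is needed here is also accurate.
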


\subsection{Mitchell order and Ultrapower Axiom}
Let $\kappa$ be a measurable cardinal, and $\mathcal{U}_{1}$ and $\mathcal{U}_{2}$ be the normal
measures on $\kappa$. Define the relation $\unlhd$ as follows.
\begin{center}
    $\mathcal{U}_{1} \unlhd \mathcal{U}_{2}$ if and only if $\mathcal{U}_{1} \in Ult_{\mathcal{U}_{2}} V$
\end{center}
The relation $\mathcal{U}_{1} \unlhd \mathcal{U}_{2}$ is the Mitchell order. The Mitchell order is well-founded (c.f. [\cite{Jec2003}, \textbf{Lemma 19.32}]). The order of $\kappa$, denoted by $o(\kappa)$, is the height of $\unlhd$. 
The Ultrapower Axiom UA, introduced by Goldberg in [\cite{Gol2018}, Definitions 2.1 – 2.3], states
the following.

\begin{defn}{\em \textbf{(Ultrapower axiom (c.f. [\cite{Gol2018}, Definitions 2.1 – 2.3])).}}
{\em Let $V$ be a model of ZFC and $\mathcal{U}_{0}$, $\mathcal{U}_{1}$ in $V$ are countably complete ultrafilters over $x_{0} \in V$, $x_{1} \in V$
respectively with $j_{\mathcal{U}_{0}}: V \rightarrow M_{\mathcal{U}_{0}}$ and $j_{\mathcal{U}_{1}}: V \rightarrow M_{\mathcal{U}_{1}}$ the associated elementary embeddings. Then there exist $W_{0}\in M_{\mathcal{U}_{0}}$ a countably complete ultrafilter over $y_{0} \in M_{\mathcal{U}_{0}}$ and $W_{1} \in M_{\mathcal{U}_{1}}$ a countably complete ultrafilter over $y_{1} \in M_{\mathcal{U}_{1}}$ such that the following hold.
\begin{enumerate}
\item  For $j_{\mathcal{W}_{0}}: M_{\mathcal{U}_{0}} \rightarrow M_{W_{0}}$ and $j_{\mathcal{W}_{1}}: M_{\mathcal{U}_{1}} \rightarrow M_{W_{1}}$ the associated elementary embeddings, we have  $M_{W_{0}}=M_{W_{1}}=M$.
\item $j_{\mathcal{W}_{0}}.j_{\mathcal{U}_{0}}=j_{\mathcal{W}_{1}}.j_{\mathcal{U}_{1}}$.
\end{enumerate}
\[ \begin{tikzcd}
V \arrow{r}{j_{\mathcal{U}_{0}}} \arrow[swap]{d}{j_{\mathcal{U}_{1}}} & M_{\mathcal{U}_{0}} \arrow{d}{j_{\mathcal{W}_{0}}} \\%
M_{\mathcal{U}_{1}} \arrow{r}{j_{\mathcal{W}_{1}}}& M
\end{tikzcd}
\]
}
\end{defn}

\begin{prop}{\em (c.f. [\cite{Apt2020}, \textbf{Proposition 1}]).}
{\em Assume UA. Let $\gamma = \vert \delta\vert$. If $\lambda$ is a measurable cardinal such that $o(\lambda) = \delta$, then
the number of normal measures $\lambda$ carries is $\gamma$.}
\end{prop}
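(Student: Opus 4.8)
The plan is to identify the collection of normal measures on $\lambda$ with a well-ordering whose order type is exactly $o(\lambda)$, and then read off its cardinality. Write $\mathcal{S}$ for the set of all normal measures on $\lambda$, ordered by the Mitchell order $\unlhd$. The decisive external input is \emph{Goldberg's theorem that UA implies the linearity of the Mitchell order} (c.f. \cite{Gol2018}): under UA, any two normal measures on $\lambda$ are $\unlhd$-comparable. Combined with the well-foundedness of $\unlhd$ already recorded (c.f. [\cite{Jec2003}, \textbf{Lemma 19.32}]), this makes $(\mathcal{S},\unlhd)$ a genuine well-ordering rather than merely a well-founded partial order.

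First I would unwind the definition of $o(\lambda)$. By definition $o(\lambda)$ is the height of $\unlhd$ on $\mathcal{S}$, that is, the ordinal rank of this well-founded relation, and by hypothesis this equals $\delta$. Once linearity is in hand, $(\mathcal{S},\unlhd)$ is a well-order, and for a well-order the rank of the element occupying position $\alpha$ is exactly $\alpha$; hence the height of $\unlhd$ coincides with the order type of $\mathcal{S}$. It follows that $\mathcal{S}$, under $\unlhd$, has order type precisely $\delta$.

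Finally I would pass from order type to cardinality. A set well-ordered in order type $\delta$ is in bijection with $\delta$, so $\vert \mathcal{S}\vert = \vert\delta\vert = \gamma$. Since $\mathcal{S}$ was taken to be the set of \emph{all} normal measures on $\lambda$, there are no measures lying outside this chain, and the count of normal measures on $\lambda$ is exactly $\gamma$; both the upper and the lower bound fall out of the single fact that the order type is $\delta$, so no separate construction of witnessing measures is required.

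The main obstacle is concentrated entirely in the linearity of the Mitchell order. In ZFC alone $\unlhd$ is only well-founded and may be wide, so $o(\lambda)=\delta$ controls only the height and is consistent with as many as $2^{2^{\lambda}}$ normal measures on $\lambda$. It is precisely UA, via Goldberg's theorem, that eliminates all $\unlhd$-incomparability and thereby forces the number of normal measures to collapse onto the height $o(\lambda)$. Everything else is bookkeeping about ranks of well-orders.
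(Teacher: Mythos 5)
Your proof is correct, and it is essentially the argument behind the cited result: the paper itself gives no proof of Proposition 2.9, deferring to [Apt2020, Proposition 1], whose proof is exactly this combination of Goldberg's UA-implies-linearity theorem with the ZFC well-foundedness of the Mitchell order, so that the normal measures on $\lambda$ form a well-order whose height $o(\lambda)=\delta$ is its order type, giving $\vert\delta\vert=\gamma$ many measures. No gaps: linearity is indeed the entire content, and the rank bookkeeping is as you describe.
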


\section{Removing the assumption that all strongly compact cardinals are limits of measurable cardinals} 
\subsection{Proving Theorem 1.1} We start with a sequence of strongly compact cardinals, which sequence has ordertype $\rho\in (\omega,\omega_{1}^{V}]$, as assumed in [\cite{Dim2011}, \textbf{chapter 2}, $\S$4] and construct our desired symmetric extension. Intuitively, we replace the injective tree Prikry forcing for type 1 cardinals as done in [\cite{Dim2011}, \textbf{chapter 2}, $\S$4] with the strongly compact Prikry forcing as in \cite{AH1991}.

{\bf{\underline{Defining the ground model ($V$)}:}}
Let $V$ be a model of ZFC where for some ordinal $\rho\in (\omega,\omega_{1}]$, there is a $\rho$-long sequence $\langle\kappa_{\epsilon}: 0<\epsilon<\rho\rangle$ of strongly compact cardinals. Let $\eta$ be the limit of this sequence. Let $Reg^{\eta}$ be the set of infinite regular cardinals $\alpha\in (\omega,\eta)$. We classify each $\alpha\in Reg^{\eta}$ in three types as follows.
\begin{itemize}
\item \textbf{(type 0).} If $\alpha\in(\omega,\kappa_{1})$.
\item \textbf{(type 1).} If $\alpha\geq\kappa_{1}$ and there is a largest $\kappa_{\epsilon}\leq\alpha$, i.e., $\alpha\in [\kappa_{\epsilon},\kappa_{\epsilon+1})$. 
\item \textbf{(type 2).} If $\alpha\geq\kappa_{1}$ and there is no largest stongly compact $\leq\alpha$, then let $\beta_{\alpha}=\cup\{\kappa_{\epsilon}:\kappa_{\epsilon}<\alpha\}$. We ditto Gitik's treatment for type 2 cardinals from chapter 2, section 4 of \cite{Dim2011}.
\end{itemize}

{\bf{\underline{Defining a symmetric inner model of a forcing extension of $V$}:}} Let $Reg^{\eta}_{0}$ be the set of all regular type 0 cardinals in $(\omega, \eta)$, $Reg^{\eta}_{1}$ be the set of all regular type 1 cardinals in $(\omega, \eta)$ and $Reg^{\eta}_{2}$ be the set of all regular type 2 cardinals in $(\omega, \eta)$. 

{\bf{\underline{Defining the partially ordered set}:}}
\begin{itemize}
    \item Let $\mathbb{P}_{\alpha}=\{p:\omega\rightharpoonup \alpha:\vert p \vert <\omega\}$ for every $\alpha\in Reg^{\eta}_{0}$ and $\mathbb{P}_{0}=\Pi^{fin}_{\alpha\in Reg^{\eta}_{0}}\mathbb{P}_{\alpha}$.
    \item Let $\mathcal{U}$ be the fine measure on $\mathcal{P}_{\kappa_{\epsilon}}(\kappa_{\epsilon+1})$, then we let $\mathbb{P}_{\kappa_{\epsilon}}$ to be the strongly compact Prikry forcing $\mathbb{P}_{\mathcal{U}}$. Let $\mathbb{P}_{1}=\Pi^{fin}_{0<\epsilon<\rho}\mathbb{P}_{\kappa_{\epsilon}}$ be the finite support product of $\mathbb{P}_{\kappa_{\epsilon}}$ where $0<\epsilon<\rho$.
    
    \item For each $\alpha\in Reg^{\eta}_{2}$, let $\mathbb{P}_{\alpha}$ be the forcing notion as described in [\cite{Dim2011}, \textbf{chapter 2}, $\S$4] for type 2 cardinals. Let $\mathbb{P}_{2}=\Pi^{fin}_{\alpha\in Reg^{\eta}_{2}}\mathbb{P}_{\alpha}$.
\end{itemize}
Let the desired forcing notion $\mathbb{P}$ be the product of $\mathbb{P}_{0}$, $\mathbb{P}_{1}$ and $\mathbb{P}_{2}$. Let $G$ be a $\mathbb{P}$-generic filter. 
  
{\bf{\underline{Defining the symmetric inner model}:}} 
We consider our symmetric inner model $\mathcal{N}$ to be the least model of ZF extending $V$ such that $V[G\restriction X]\subseteq \mathcal{N}$ for each $X\in \mathcal{I}$ where $\mathcal{I}$ is described as follows.
\begin{itemize}
    \item For every finite $e_{0}\subseteq Reg^{\eta}_{0}$, we define $E_{e_{0}}=\{p\restriction e_{0} : p\in \mathbb{P}_{0}\}$.
    \item For $m<\omega$ and $e_{1}=\{\alpha_1,...,\alpha_m\}\subseteq Reg^{\eta}_{1}$ a sequence of inaccessible cardinals in $Reg^{\eta}_{1}$ such that for each  $\alpha_{i}\in e_{1}$, there is a distinct $\epsilon_{\alpha_{i}}\in Ord$ such that $\alpha_{i}\in [\kappa_{\epsilon_{\alpha_{i}}},\kappa_{\epsilon_{\alpha_{i}}+1})$,\footnote{i.e., if $\alpha_{i}\not=\alpha_{j}\in e_{1}$, $\alpha_{i}\in [\kappa_{\epsilon_{\alpha_{i}}},\kappa_{\epsilon_{\alpha_{i}+1}})$ and $\alpha_{j}\in [\kappa_{\epsilon_{\alpha_{j}}},\kappa_{\epsilon_{\alpha_{j}}+1})$ then $\epsilon_{\alpha_{i}}\not=\epsilon_{\alpha_{j}}$.} we define $E_{e_{1}}=\Pi_{i\in \{1,2,...,m\}}\mathbb{P}_{\mathcal{U}_{\epsilon_{\alpha_{i}}}\restriction \alpha_{i}}$ 
    where $\mathcal{U}_{\epsilon_{\alpha_{i}}}\restriction \alpha_{i}$ is the fine measure on $\mathcal{P}_{\kappa_{\epsilon_{\alpha_{i}}}}(\alpha_{i})$ induced by some fine measure
    $\mathcal{U}_{\epsilon_{\alpha_{i}}}$ on
    $\mathcal{P}_{\kappa_{\epsilon_{\alpha_{i}}}}(\kappa_{\epsilon_{\alpha_{i}}+1})$ and
    $\mathbb{P}_{\mathcal{U}_{\epsilon_{\alpha_{i}}}\restriction \alpha_{i}}$ is the strongly compact Prikry forcing with respect to the fine measure
    $\mathcal{U}_{\epsilon_{\alpha_{i}}}\restriction \alpha_{i}$.
    \item For every finite $e_{2}\subseteq Reg^{\eta}_{2}$, we define $E_{e_{2}}=\{\overrightarrow{T}\restriction e_{2} : \overrightarrow{T}\in \mathbb{P}_{2}\}$.
    \item Let $\mathcal{I}=\{E_{e_{0}}\times E_{e_{1}}\times E_{e_{2}}:  e_{0}$ is any  finite subset of $Reg^{\eta}_{0}$, $e_{2}$ is any  finite subset of $Reg^{\eta}_{2}$ and $e_{1}$ is any finite collection of inaccessible cardinals in $Reg^{\eta}_{1}$ such that for each $\alpha_{i}\in e_{1}$, there is a distinct $\epsilon_{\alpha_{i}}\in Ord$ such that $\alpha_{i}\in [\kappa_{\epsilon_{\alpha_{i}}},\kappa_{\epsilon_{\alpha_{i}}+1}) \}$.
\end{itemize}
 
Formally, we define $\mathcal{N}$ as follows.
Let $\mathcal{L}$ be the forcing language with respect to $\mathbb{P}$. Let $\mathcal{L}_{1}\subseteq \mathcal{L}$ be a ramified sublanguage which contains symbols $\overrightarrow{v}$ for each $v\in V$, a predicate symbol $\overrightarrow{V}$ (to be interpreted as $\overrightarrow{V}(\overrightarrow{v})\leftrightarrow v\in V$), and symbols $\overline{G\restriction X}$ for each $X\in \mathcal{I}$. $\mathcal{N}$ is then defined inside $V[G]$ as follows.
\begin{itemize}
    \item $\mathcal{N}_{0}=\emptyset$.
    \item $\mathcal{N}_{\alpha+1}=\{x\subseteq \mathcal{N}_{\alpha}: x\in V[G]$ and is definable over $\langle\mathcal{N}_{\alpha}, \epsilon, c\rangle_{c\in \mathcal{N}_{\alpha}}$ by a formula $\phi\in\mathcal{L}_{1}$ of rank $\leq\alpha\}$.
    \item $\mathcal{N}_{\alpha}=\cup_{\beta<\alpha} \mathcal{N}_{\beta}$ for $\alpha$ a limit ordinal.
    \item $\mathcal{N}=\cup_{\alpha\in Ord} \mathcal{N}_{\alpha}$.
\end{itemize}

We recall the homogeneity of $\mathbb{P}_{0}$ (c.f. [\cite{Dim2011}, \textbf{chapter 1}, $\S$ 3]), the homogeneity of strongly compact Prikry forcing from [\cite{AH1991}, \textbf{Lemma 2.1}], homogeneity of injective tree-Prikry forcing from [\cite{Dim2011}, \textbf{Lemma 2.15}] and [\cite{Dim2011}, \textbf{Lemma 2.23}]. We also recall the fact that finite support product of weakly (cone) homogeneous forcing are weakly (cone) homogeneous. Consequently, we can obtain the desired homogeneity of $\mathbb{P}$ and observe the following lemma.

\begin{lem}
{\em If $X'$ is a set of ordinals in $\mathcal{N}$, then for some $X\in \mathcal{I}$, $X'\in V[G\restriction X]$.}
\end{lem}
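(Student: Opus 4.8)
The plan is to prove this approximation lemma using the weak homogeneity of the forcing notion $\mathbb{P}$ together with the standard machinery of symmetric inner models, following the approach alluded to in [\cite{Dim2011}, \textbf{Lemma 1.29}]. First I would make precise the automorphism group and filter of subgroups implicit in the construction of $\mathcal{N}$: since $\mathcal{I}$ is closed under the relevant finite unions and each member of $\mathcal{I}$ determines a ``support'' of coordinates, the symmetric inner model $\mathcal{N}$ should be recognizable as the collection of hereditarily symmetric names, where a name $\dot{x}$ is symmetric if its stabilizer (under the automorphism group of $\mathbb{P}$ built from the weakly homogeneous factors) contains the pointwise stabilizer $Fix(X)$ of some $X\in\mathcal{I}$. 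I would verify that the recursive definition of $\mathcal{N}$ via the ramified sublanguage $\mathcal{L}_1$ agrees with this symmetric-names description, so that every element of $\mathcal{N}$ has a name whose support lies in $\mathcal{I}$.

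Next, I would take an arbitrary set of ordinals $X'\in\mathcal{N}$ and fix a name $\dot{X'}\in\mathcal{L}_1$ for it together with a condition $p\in G$ forcing the relevant symmetry statement. The key step is to show that membership $\check\xi\in\dot{X'}$ is decided entirely by the restriction of the generic to some fixed $X\in\mathcal{I}$. For each ordinal $\xi$ in the range, the statement ``$\check\xi\in\dot{X'}$'' is symmetric, so its Boolean value is fixed by $Fix(X)$ for the support $X$ of $\dot{X'}$. I would then argue that any weakly homogeneous automorphism $a\in Fix(X)$ acting trivially on $X$ but nontrivially off $X$ cannot change whether $\xi\in X'$; combined with the fact that the factors of $\mathbb{P}$ outside $X$ form a weakly homogeneous forcing, this forces the value of $\check\xi\in\dot{X'}$ to depend only on $G\restriction X$. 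Assembling these decisions, $X'$ itself is computed from $G\restriction X$, giving $X'\in V[G\restriction X]$.

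The main obstacle I expect is purely bookkeeping about the heterogeneous product structure: $\mathbb{P}$ is a product of the Cohen-type part $\mathbb{P}_0$, the strongly compact Prikry part $\mathbb{P}_1$, and the type-2 part $\mathbb{P}_2$, and the supports in $\mathcal{I}$ restrict these three factors in genuinely different ways (finite restrictions for $E_{e_0}$ and $E_{e_2}$, but a product of \emph{restricted} strongly compact Prikry forcings $\mathbb{P}_{\mathcal{U}_{\epsilon_{\alpha_i}}\restriction\alpha_i}$ for $E_{e_1}$). I would need to check carefully that for each factor the automorphisms moving coordinates outside the chosen support genuinely witness weak homogeneity relative to that support, in particular that the restriction map sending $\mathbb{P}_{\kappa_\epsilon}$-generics to $\mathbb{P}_{\mathcal{U}\restriction\alpha}$-generics behaves correctly and that the induced fine measures $\mathcal{U}_{\epsilon_{\alpha_i}}\restriction\alpha_i$ give the homogeneity established in [\cite{AH1991}, \textbf{Lemma 2.1}]. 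Once the homogeneity of $\mathbb{P}$ relative to each $X\in\mathcal{I}$ is in hand, the symmetry argument above is routine, and I would invoke [\cite{Dim2011}, \textbf{Lemma 1.29}] to streamline the final extraction of $X'\in V[G\restriction X]$.
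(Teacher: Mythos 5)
Your proposal is correct and follows essentially the same route as the paper: the paper's own justification is precisely the observation that each factor ($\mathbb{P}_0$, the strongly compact Prikry forcings via [\cite{AH1991}, \textbf{Lemma 2.1}], and the type-2 forcings) is weakly (cone) homogeneous, that finite support products preserve this, and that the approximation machinery of [\cite{Dim2011}, \textbf{Lemma 1.29}] then yields the lemma. Your write-up merely spells out the symmetric-names/support bookkeeping that the paper leaves implicit, so there is no substantive difference in approach.
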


We recall the Prikry like lemma for the injective tree Prikry forcing [\cite{Dim2011}, \textbf{Lemma 2.24}] and the Prikry like lemma for the strongly compact Prikry forcing [\cite{AH1991}, \textbf{Lemma 1.1}]. We apply this to show that all $\kappa_{\alpha}$ for $0<\alpha<\rho$, and their limits are still cardinals in $\mathcal{N}$. 

\begin{lem}{\em For every $0<\epsilon < \rho$, $\kappa_{\epsilon}$ is a cardinal in $\mathcal{N}$. Consequently, their limits are also preserved.}
\end{lem}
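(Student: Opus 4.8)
The plan is to show that each strongly compact cardinal $\kappa_\epsilon$ (and hence its limits, which are limits of cardinals and thus automatically cardinals) remains a cardinal in $\mathcal{N}$. The strategy is to argue by contradiction: suppose $\kappa_\epsilon$ is collapsed in $\mathcal{N}$. Then there is a surjection $g : \gamma \to \kappa_\epsilon$ in $\mathcal{N}$ for some $\gamma < \kappa_\epsilon$. Since $g$ is (coded by) a set of ordinals in $\mathcal{N}$, Lemma~2.25 (the symmetry lemma just proved) gives some $X \in \mathcal{I}$ with $g \in V[G\restriction X]$. So it suffices to show that $\kappa_\epsilon$ is not collapsed in any such intermediate model $V[G\restriction X]$.

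The key step is to analyze the structure of $V[G\restriction X]$ for $X = E_{e_0}\times E_{e_1}\times E_{e_2} \in \mathcal{I}$ and factor the forcing relative to $\kappa_\epsilon$. The forcing $\mathbb{P}_0$ (finite conditions into type 0 cardinals, all below $\kappa_1\le\kappa_\epsilon$) together with the finitely many type 2 factors and the finitely many $\kappa_{\epsilon_{\alpha_i}}$-strongly-compact-Prikry factors recorded in $e_1$ should be split into a piece of size/chain-condition below $\kappa_\epsilon$ and a piece that is sufficiently closed or adds no small sets. Concretely, I would write $G\restriction X = G_{<\kappa_\epsilon}\times G_{\ge\kappa_\epsilon}$, where $G_{<\kappa_\epsilon}$ collects the factors lying entirely below $\kappa_\epsilon$ and $G_{\ge\kappa_\epsilon}$ collects the rest. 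The crucial inputs are the two Prikry-like lemmas recalled just before the statement: the strongly compact Prikry forcing $\mathbb{P}_{\mathcal{U}}$ (and each $\mathbb{P}_{\mathcal{U}\restriction\delta}$) adds no bounded subsets to its critical cardinal, and is $(\lambda^{<\kappa})^+$-c.c.; the injective tree-Prikry forcing has the analogous no-new-bounded-subsets and chain-condition properties. In particular each strongly compact Prikry factor $\mathbb{P}_{\mathcal{U}_{\epsilon_{\alpha_i}}\restriction\alpha_i}$ with critical point $\kappa_{\epsilon_{\alpha_i}}$ adds no bounded subsets of $\kappa_{\epsilon_{\alpha_i}}$.

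The heart of the argument is a cardinality/closure count. Any factor with critical cardinal $\kappa_\delta < \kappa_\epsilon$ has size $<\kappa_\epsilon$ below $\kappa_\epsilon$ after restricting, and being either small (the type 0 finite-condition part and the restricted Prikry parts of bounded height) or having a chain condition strictly below $\kappa_\epsilon$, cannot collapse the strongly compact — hence strongly inaccessible — cardinal $\kappa_\epsilon$, because a surjection $\gamma\to\kappa_\epsilon$ with $\gamma<\kappa_\epsilon$ would be a new bounded subset or would violate the relevant chain condition. For the factors whose critical cardinal is $\ge\kappa_\epsilon$ (namely $\mathbb{P}_{\kappa_\epsilon}$ itself and everything above it), I invoke that these add no bounded subsets of $\kappa_\epsilon$, so a surjection $\gamma\to\kappa_\epsilon$ coded as a subset of $\gamma<\kappa_\epsilon$ cannot appear from that side either. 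Since $X$ records only finitely many factors, the relevant bounds assemble: the part below $\kappa_\epsilon$ has cardinality $<\kappa_\epsilon$ (using GCH/strong inaccessibility so that $\lambda^{<\kappa}<\kappa_\epsilon$ for each lower factor), so it is $\kappa_\epsilon$-c.c., and the part at or above $\kappa_\epsilon$ adds no bounded subset of $\kappa_\epsilon$. No collapse of $\kappa_\epsilon$ can therefore occur in $V[G\restriction X]$.

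I expect the main obstacle to be the bookkeeping in the factorization, specifically verifying that the factor below $\kappa_\epsilon$ really is $\kappa_\epsilon$-c.c. (this needs the strong inaccessibility of $\kappa_\epsilon$ together with the chain-condition $(\lambda^{<\kappa})^+$ of each lower strongly compact Prikry factor, i.e. one must check $(\lambda^{<\kappa})^+<\kappa_\epsilon$ for the relevant $\kappa<\lambda<\kappa_\epsilon$), and that the commutation of the Prikry-type factors with the remaining forcing is clean enough that ``adds no bounded subset of $\kappa_\epsilon$'' survives the product. Once the factorization and these two one-sided preservation facts are in place, the contradiction is immediate and the limits of the sequence are preserved for free, being suprema of preserved cardinals.
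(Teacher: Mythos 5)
Your proposal is correct and takes essentially the same route as the paper: both argue by contradiction, use the symmetry lemma (Lemma 3.1) to place the collapsing function in an intermediate model $V[G\restriction X]$, and then factor the forcing into a part below $\kappa_{\epsilon}$ (shown to be $\kappa_{\epsilon}$-c.c.\ using the chain conditions of the individual factors together with the inaccessibility of $\kappa_{\epsilon}$) and a part at or above $\kappa_{\epsilon}$ (shown to add no bounded subsets of $\kappa_{\epsilon}$ by the Prikry-like lemmas for strongly compact and injective tree Prikry forcing). The only difference is organizational: the paper runs this split in three successive steps over $E_{e_{0}}$, $E_{e_{1}}$, $E_{e_{2}}$, whereas you do one global two-piece factorization, which is the same argument.
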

\begin{proof}
For the sake of contradiction we assume that for some $0<\epsilon<\rho$, there is some $\beta<\kappa_{\epsilon}$ and a bijection $f:\beta\rightarrow\kappa_{\epsilon}$ in $\mathcal{N}$. By \textbf{Lemma 3.1}, for some $X\in\mathcal{I}$, $f\in V[G\restriction X]$. 
Let $X$ be $E_{e_{0}}\times E_{e_{1}}\times E_{e_{2}}$ such that $e_{0}$ is some finite subset of $Reg^{\eta}_{0}$, $e_{2}$ is some finite subset of $Reg^{\eta}_{2}$ and $e_{1}$ is a finite collection of inaccessible cardinals in $Reg^{\eta}_{1}$ such that for each $\alpha_{i}\in e_{1}$, there is a distinct $\epsilon_{\alpha_{i}}\in Ord$ such that $\alpha_{i}\in [\kappa_{\epsilon_{\alpha_{i}}},\kappa_{\epsilon_{\alpha_{i}}+1})$. We may imagine $V[G\restriction X]$ as $V[G\restriction E_{e_{0}}][G\restriction E_{e_{1}}][G\restriction E_{e_{2}}]$ and show that $f$ is not added in $V[G\restriction E_{e_{0}}][G\restriction E_{e_{1}}][G\restriction E_{e_{2}}]$ to obtain a contradiction.

{\bf{\underline{(Step 1) $f$ is not added in $V[G\restriction E_{e_{0}}]$}:}} Clearly, $E_{e_{0}}$ is $\kappa_{\epsilon}$-c.c. So $f$ is not added in $V[G\restriction E_{e_{0}}]$.

{\bf{\underline{(Step 2) $f$ is not added in $V[G\restriction E_{e_{0}}][G\restriction E_{e_{1}}]$}:}}
Let $\{\alpha_{1},...,\alpha_{m}\}$ be an increasing enumeration of $e_{1}$, and let for each $1\leq i\leq m$ there is a distinct $\epsilon_{\alpha_{i}}$ such that $\alpha_{i}\in [\kappa_{\epsilon_{\alpha_{i}}},\kappa_{\epsilon_{\alpha_{i}}+1})$. Let $1\leq j\leq m$ be the greatest such that $\kappa_{\epsilon}>\alpha_{j}$. We can write $E_{e_{1}}$ as $\Pi_{i=1,...,j }\mathbb{P}_{\mathcal{U}_{\epsilon_{\alpha_{i}}}\restriction \alpha_{i}}\times \Pi_{i=j+1,...,m}\mathbb{P}_{\mathcal{U}_{\epsilon_{\alpha_{i}}}\restriction \alpha_{i}}$ where for each $1\leq i\leq m$, $\mathcal{U}_{\epsilon_{\alpha_{i}}}\restriction \alpha_{i}$ is the fine measure on $\mathcal{P}_{\kappa_{\epsilon_{\alpha_{i}}}}(\alpha_{i})$ and $\mathbb{P}_{\mathcal{U}_{\epsilon_{\alpha_{i}}}\restriction \alpha_{i}}$ is the strongly compact Prikry forcing with respect to the fine measure $\mathcal{U}_{\epsilon_{\alpha_{i}}}\restriction \alpha_{i}$.
Clearly, $\Pi_{i=j+1,...,m}\mathbb{P}_{\mathcal{U}_{\epsilon_{\alpha_{i}}}\restriction \alpha_{i}}$ do not add any bounded subset of $\kappa_{\epsilon}$ following the Prikry like lemma  for the strongly compact Prikry forcing.
Moreover for each $i=1,...,j$, $\mathbb{P}_{\mathcal{U}_{\epsilon_{\alpha_{i}}}\restriction \alpha_{i}}$ is $(\alpha_{i}^{<\kappa_{\epsilon_{\alpha_{i}}}})^{+}-c.c.$
and since $\kappa_{\epsilon}$ is inaccessible, the $\kappa_{\epsilon}$-c.c. So, the partial order $\Pi_{i=1,...,j}\mathbb{P}_{\mathcal{U}_{\epsilon_{\alpha_{i}}}\restriction \alpha_{i}}$ has $\kappa_{\epsilon}$-c.c. Thus, $f$ is not added in $V[G\restriction E_{e_{0}}][G\restriction E_{e_{1}}]$ either.

{\bf{\underline{(Step 3) $f$ is not added in $V[G\restriction E_{e_{0}}][G\restriction E_{e_{1}}][G\restriction E_{e_{2}}]$}:}}
Clearly, $E_{e_{2}}=E_{e_{2}\cap\kappa_{\epsilon}}\times E_{e_{2}\backslash\kappa_{\epsilon}}$ where $E_{e_{2}\cap\kappa_{\epsilon}}$ is $\kappa_{\epsilon}$-c.c. and $E_{e_{2}\backslash\kappa_{\epsilon}}$ does not add bounded subsets to $\kappa_{\epsilon}$ following the Prikry like lemma for the injective tree Prikry forcing from [\cite{Dim2011}, \textbf{Lemma 2.24}]. Thus, no such $f$ can exist in $V[G\restriction E_{e_{0}}][G\restriction E_{e_{1}}][G\restriction E_{e_{2}}]$ also.
\end{proof}

\begin{lem}
{\em In $\mathcal{N}$, the regular cardinals of type 2 have collapsed to their singular limits of strongly compact cardinals below them and if $\alpha\in (\kappa_{\epsilon},\kappa_{\epsilon+1})$ is a regular cardinal of type 1 where $0<\epsilon<\rho$, then $(\vert\alpha\vert=\kappa_{\epsilon})^{\mathcal{N}}$.}
\end{lem}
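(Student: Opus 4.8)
The plan is to prove the two assertions separately, in each case isolating a single coordinate of $\mathcal{I}$ whose intermediate model $V[G\restriction X]$ already carries out the collapse; since $V[G\restriction X]\subseteq\mathcal{N}$, the relevant surjection then belongs to $\mathcal{N}$, and Lemma 3.2 supplies the matching lower bound so that the collapse is exact.

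For the type 1 claim, fix a regular $\alpha\in(\kappa_{\epsilon},\kappa_{\epsilon+1})$. Because $\kappa_{\epsilon+1}$ is strongly compact, it is measurable and hence a limit of inaccessible cardinals, so I can fix an inaccessible $\alpha'$ with $\alpha\leq\alpha'<\kappa_{\epsilon+1}$; note $\kappa_{\epsilon}<\alpha'$. Then $X:=E_{\emptyset}\times E_{\{\alpha'\}}\times E_{\emptyset}$ is a legitimate member of $\mathcal{I}$ and $V[G\restriction X]=V[G\restriction E_{\{\alpha'\}}]\subseteq\mathcal{N}$. The forcing $\mathbb{P}_{\mathcal{U}_{\epsilon}\restriction\alpha'}$ is the strongly compact Prikry forcing for the induced fine measure $\mathcal{U}_{\epsilon}\restriction\alpha'$ on $\mathcal{P}_{\kappa_{\epsilon}}(\alpha')$, and as recalled in $\S 2.3$ it collapses $(\alpha')^{V}$ onto $\kappa_{\epsilon}$ while adding no bounded subset of $\kappa_{\epsilon}$. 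Thus $\mathcal{N}$ contains a bijection $h:\kappa_{\epsilon}\to\alpha'$; restricting $h$ to $h^{-1}(\alpha)\subseteq\kappa_{\epsilon}$ gives $(\vert\alpha\vert\leq\kappa_{\epsilon})^{\mathcal{N}}$. Conversely, $\kappa_{\epsilon}<\alpha$ furnishes an injection of $\kappa_{\epsilon}$ into $\alpha$, and since $\kappa_{\epsilon}$ remains a cardinal in $\mathcal{N}$ by Lemma 3.2, any bijection of $\alpha$ with an ordinal $<\kappa_{\epsilon}$ would collapse $\kappa_{\epsilon}$ in $\mathcal{N}$, which is impossible. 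Hence $(\vert\alpha\vert=\kappa_{\epsilon})^{\mathcal{N}}$.

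For the type 2 claim, fix a regular type 2 cardinal $\alpha$ with $\beta_{\alpha}=\bigcup\{\kappa_{\epsilon}:\kappa_{\epsilon}<\alpha\}$. Here the relevant coordinate is a type 2 component $E_{e_{2}}$ with $\alpha\in e_{2}$, and the argument is exactly Gitik's treatment of type 2 cardinals reproduced in [\cite{Dim2011}, \textbf{chapter 2}, $\S 4$]: for suitable finite $e_{2}$ the generic $G\restriction E_{e_{2}}$ yields a surjection of $\beta_{\alpha}$ onto $\alpha$, lying in $V[G\restriction X]\subseteq\mathcal{N}$ for the corresponding $X\in\mathcal{I}$. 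As the strongly compact cardinals below $\alpha$, and therefore their limit $\beta_{\alpha}$, are preserved in $\mathcal{N}$ by Lemma 3.2, we conclude $(\vert\alpha\vert=\beta_{\alpha})^{\mathcal{N}}$, i.e. $\alpha$ has collapsed to its singular limit.

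I expect the main obstacle to be the type 1 reduction, since $\mathcal{I}$ only contains the restricted strongly compact Prikry forcings $\mathbb{P}_{\mathcal{U}_{\epsilon}\restriction\alpha'}$ for \emph{inaccessible} $\alpha'$; a non-inaccessible regular $\alpha$ must first be absorbed into an inaccessible $\alpha'\geq\alpha$ below $\kappa_{\epsilon+1}$. This is precisely the step requiring that each $\kappa_{\epsilon+1}$ be a limit of inaccessibles, a consequence of measurability that, importantly, does not presuppose strongly compact cardinals to be limits of measurables. The remaining content is bookkeeping, with Lemma 3.2 converting the upper bound $\vert\alpha\vert\leq\kappa_{\epsilon}$ into the stated equality.
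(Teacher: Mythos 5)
Your proposal is correct and takes essentially the same route as the paper: for type 1 the paper also passes to the intermediate ZFC model $V[G\restriction E_{\{\beta\}}]$ for an inaccessible $\beta\in(\alpha,\kappa_{\epsilon+1})$, shows everything in $(\kappa_{\epsilon},\beta]$ is collapsed there, and transfers the collapsing function into $\mathcal{N}$, while the type 2 case is delegated to [\cite{Dim2011}, \textbf{Lemma 2.28}] exactly as you do. The only differences are cosmetic: the paper justifies the collapse by the least-surviving-cardinal contradiction of [\cite{AH1991}, \textbf{Lemma 2.4}] (regular cardinals in the interval acquire cofinality $\omega$, contradicting AC in the intermediate model) instead of quoting the covering/collapse fact from \S 2.3 as you do, and it leaves the Lemma 3.2 lower bound implicit where you spell it out.
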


\begin{proof}
Following [\cite{Dim2011}, \textbf{Lemma 2.28}] the regular cardinals of type 2 have collapsed to their singular limits of strongly compact cardinals below them. 
Following [\cite{AH1991}, \textbf{Lemma 2.4}] if $\alpha\in (\kappa_{\epsilon},\kappa_{\epsilon+1})$ is a regular cardinal of type 1 where $0<\epsilon<\rho$, then $(\vert\alpha\vert=\kappa_{\epsilon})^{\mathcal{N}}$.\footnote{The argument goes as follows. Let $\alpha\in (\kappa_{\epsilon},\kappa_{\epsilon+1})$ is a type 1 regular cardinal and $\beta\in (\alpha,\kappa_{\epsilon+1})$ be an inaccessible cardinal in $V$. We first show that $\alpha$ is no longer a cardinal in $V[G\restriction E_{\{\beta\}}]$. 
More specifically, we show that there are no cardinals in the interval $(\kappa_{\epsilon},\beta]$ in $V[G\restriction E_{\{\beta\}}]$. For the sake of contrary, let $\alpha_{1}\in (\kappa_{\epsilon},\beta]$ be the least cardinal in $V$ which remains a cardinal in $V[G\restriction E_{\{\beta\}}]$. We observe contradiction in each of the  following two cases.

\textbf{Case (i). If $\alpha_{1}$ is a regular cardinal in $V$.}
We can see that $cf(\alpha_{1})=\omega$ in $V[G\restriction E_{\{\beta\}}]$. By the leastness of the cardinality of $\alpha_{1}$, $\alpha_{1}=\kappa_{\epsilon}^{+}$. But, $cf(\kappa_{\epsilon}^{+})=\omega$ in $V[G\restriction E_{\{\beta\}}]$ is impossible since $V[G\restriction E_{\{\beta\}}]$ is a model of $AC$.

\textbf{Case (ii). \textbf{If $\alpha_{1}$ is a singular cardinal in $V$}.}
Once more $\alpha_{1}=\kappa_{\epsilon}^{+}$ in $V[G\restriction E_{\{\beta\}}]$ which is impossible since $V[G\restriction E_{\{\beta\}}]$ is a model of $AC$, and so the successor cardinal cannot be a singular cardinal. 

Thus, there are no cardinals in the interval $(\kappa_{\epsilon},\beta]$ in $V[G\restriction E_{\{\beta\}}]$. As $V[G\restriction E_{\{\beta\}}]\subseteq \mathcal{N}$, the collapsing function for $\alpha$ is in $\mathcal{N}$ as well. Consequently, $\alpha$ is not a cardinal in $\mathcal{N}$ and so $(\vert\alpha\vert=\kappa_{\epsilon})^{\mathcal{N}}$.}
\end{proof}

Consequently, we can have the following corollary similar to [\cite{Dim2011}, \textbf{Corollary 2.29}]. 
\begin{corr}{\em In $\mathcal{N}$, a cardinal in $(\omega,\eta)$ is a successor cardinal if and only if it is in $\{\kappa_{\epsilon}: \epsilon<\rho\}$ and a cardinal in $(\omega,\eta)$ is a limit cardinal if and only if it is a limit in the sequence $\{\kappa_{\epsilon}: \epsilon<\rho\}$ in $V$.}
\end{corr}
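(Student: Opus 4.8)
The plan is to determine the full cardinal structure of $\mathcal{N}$ below $\eta$ by synthesising Lemma 3.2 and Lemma 3.3, and then to read off the successor/limit classification. First I would fix notation for the limit points: for a limit ordinal $\lambda<\rho$ put $\beta_{\lambda}=\sup\{\kappa_{\delta}:\delta<\lambda\}$, so that the limit points of the sequence $\langle\kappa_{\epsilon}\rangle$ lying in $(\omega,\eta)$ are exactly these $\beta_{\lambda}$. By Lemma 3.2 every $\kappa_{\epsilon}$ and every $\beta_{\lambda}$ is a cardinal of $\mathcal{N}$, so the corollary reduces to two tasks: showing that these are the \emph{only} cardinals of $\mathcal{N}$ in $(\omega,\eta)$, and classifying each surviving cardinal as a successor or a limit.

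The heart of the argument is to verify that each open interval between consecutive surviving cardinals is completely collapsed in $\mathcal{N}$, forcing both the absence of extra cardinals and the successor status of the right endpoint. On $(\omega,\kappa_{1})$ every type $0$ regular $\alpha$ is made countable by $\mathbb{P}_{\alpha}$, and since $V[G\restriction E_{\{\alpha\}}]\subseteq\mathcal{N}$ and the intervening singular cardinals are suprema of collapsed regulars, the whole interval collapses to $\omega$; hence $\kappa_{1}=\aleph_{1}^{\mathcal{N}}$. On $(\kappa_{\delta},\kappa_{\delta+1})$ every regular cardinal is type $1$; using that a strongly compact (hence measurable) cardinal is a limit of inaccessibles, inaccessibles are cofinal in $\kappa_{\delta+1}$, so for any $\alpha$ in this interval I would pick an inaccessible $\beta$ with $\alpha<\beta<\kappa_{\delta+1}$ and apply the footnote argument of Lemma 3.3 to conclude there are no cardinals of $\mathcal{N}$ in $(\kappa_{\delta},\beta]$; letting $\beta\to\kappa_{\delta+1}$ kills the whole interval, so $\kappa_{\delta+1}=\kappa_{\delta}^{+\mathcal{N}}$. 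Finally, for a limit $\lambda$, every regular $\alpha\in(\beta_{\lambda},\kappa_{\lambda})$ has no largest strongly compact below it, so it is type $2$ with $\beta_{\alpha}=\beta_{\lambda}$, and by Lemma 3.3 all such $\alpha$ (together with the intervening singulars) collapse to $\beta_{\lambda}$; hence $\kappa_{\lambda}=\beta_{\lambda}^{+\mathcal{N}}$.

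With this collapse picture the two biconditionals follow by bookkeeping. Every $\kappa_{\epsilon}$ is a successor cardinal of $\mathcal{N}$ — of $\omega$, of $\kappa_{\delta}$, or of $\beta_{\lambda}$ according as $\epsilon=1$, $\epsilon=\delta+1$, or $\epsilon=\lambda$ is a limit — whereas every $\beta_{\lambda}$ is a limit of the preserved cardinals $\{\kappa_{\delta}:\delta<\lambda\}$ and so is a limit cardinal. These two families exhaust the cardinals of $\mathcal{N}$ in $(\omega,\eta)$ and are disjoint, since each $\kappa_{\epsilon}$ is regular in $V$ and thus can never equal a singular supremum $\beta_{\lambda}$ of a strictly increasing sequence of shorter length. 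Therefore a cardinal of $\mathcal{N}$ in $(\omega,\eta)$ is a successor exactly when it lies in $\{\kappa_{\epsilon}:\epsilon<\rho\}$ and is a limit exactly when it is a limit point of the sequence $\{\kappa_{\epsilon}:\epsilon<\rho\}$, which is the assertion of the corollary.

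I expect the main obstacle to be the exhaustiveness of the collapse inside each interval: one must confirm that the \emph{inaccessible}-indexed forcings that actually occur in $\mathcal{I}$ suffice to collapse every cardinal (singular as well as regular) in each open interval, which leans on strong compactness supplying cofinally many inaccessibles below each $\kappa_{\delta+1}$, and that the type $2$ block leaves no residual cardinal strictly between $\beta_{\lambda}$ and $\kappa_{\lambda}$. Once these two points are secured, identifying $\kappa_{\lambda}$ as $\beta_{\lambda}^{+\mathcal{N}}$ rather than as a limit cardinal is the only subtle classification, and the rest is immediate from Lemmas 3.2 and 3.3.
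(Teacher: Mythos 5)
Your overall route is the intended one: the paper gives no separate argument for this corollary (it simply records that it follows from Lemmas 3.2 and 3.3, in the manner of Dimitriou's Corollary 2.29), and your synthesis is exactly that bookkeeping --- preservation of the $\kappa_{\epsilon}$ and of the limit points $\beta_{\lambda}$ by Lemma 3.2, total collapse of each block $(\omega,\kappa_{1})$, $(\kappa_{\delta},\kappa_{\delta+1})$, $(\beta_{\lambda},\kappa_{\lambda})$ via Lemma 3.3 and its footnote, the remark that inaccessibles are cofinal below each $\kappa_{\delta+1}$ (strongly compact $\Rightarrow$ measurable $\Rightarrow$ Mahlo), and finally the identification of each $\kappa_{\epsilon}$ as $\omega^{+}$, $\kappa_{\delta}^{+}$ or $\beta_{\lambda}^{+}$ of $\mathcal{N}$, with the $\beta_{\lambda}$ the limit cardinals, the two families being disjoint since the $\kappa_{\epsilon}$ are regular in $V$ and the $\beta_{\lambda}$ singular in $V$.

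There is, however, one justification in your write-up that is invalid in ZF, and it matters precisely because $\mathcal{N}$ is a model where choice fails. For the type $0$ block you argue that the singular cardinals of $(\omega,\kappa_{1})$ collapse because they are \emph{suprema of collapsed regulars}, and you lean on the same phrase (``together with the intervening singulars'') in the type $2$ block. In ZF, a supremum of ordinals of countable cardinality need not be countable: in the Feferman--L\'evy model every $\aleph_{n}^{V}$ is made countable, yet $\aleph_{\omega}^{V}$ remains a cardinal and becomes the new $\aleph_{1}$. This is exactly the kind of phenomenon symmetric extensions such as $\mathcal{N}$ exhibit, so ``limit of collapsed cardinals'' does not imply ``collapsed''. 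The conclusion you want is still true, but the argument must come \emph{from above}, not from below, and it is the one you yourself use for the type $1$ blocks: since the top of each block ($\kappa_{1}$, $\kappa_{\delta+1}$, or $\kappa_{\lambda}$) is inaccessible in $V$, any cardinal $\mu$ of the open block lies strictly below some regular (indeed, when needed, inaccessible) cardinal $\beta$ of the same block; Lemma 3.3 (or its footnote, inside $V[G\restriction E_{\{\beta\}}]\subseteq\mathcal{N}$) collapses $\beta$ to the bottom cardinal of the block, and then the identity injection $\mu\subseteq\beta$ shows $\vert\mu\vert^{\mathcal{N}}$ equals that bottom cardinal as well. With that substitution --- e.g., for singular $\mu<\kappa_{1}$ take the $V$-regular $\alpha=(\mu^{+})^{V}<\kappa_{1}$, which $G\restriction E_{\{\alpha\}}\in\mathcal{N}$ makes countable --- your proof is complete and agrees with the intended one.
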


\begin{lem}{\em In $\mathcal{N}$, every ordinal in $Reg^{\eta}$ is singular of cofinality $\omega$. Consequently, the interval $(\omega,\eta)$ only contains singular cardinals of cofinality $\omega$}.
\end{lem}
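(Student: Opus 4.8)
The plan is to show that each $\alpha\in Reg^{\eta}$ acquires a cofinal $\omega$-sequence in $\mathcal{N}$, handling the three types separately, and then to read off the statement about the actual cardinals of $\mathcal{N}$ from \textbf{Corollary 3.4}. The organizing tool will be \textbf{Lemma 3.1}: a cofinal $\omega$-sequence in $\alpha$ is a set of ordinals, so it suffices to produce such a sequence inside some $V[G\restriction X]$ with $X\in\mathcal{I}$, since then it lies in $\mathcal{N}$ and witnesses $cf^{\mathcal{N}}(\alpha)=\omega$. The point I would stress at the outset is that the relevant witnesses come from the restricted strongly compact Prikry forcings $\mathbb{P}_{\mathcal{U}_{\epsilon}\restriction\beta}$ (for inaccessible $\beta$) that were deliberately placed into $\mathcal{I}$, not from the full $\mathbb{P}_{\kappa_{\epsilon}}$.

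First I would treat type $1$ cardinals uniformly. Given $\kappa_{\epsilon}\le\alpha<\kappa_{\epsilon+1}$ with $\alpha$ regular in $V$, I would fix an inaccessible $\beta$ with $\alpha<\beta<\kappa_{\epsilon+1}$ (these exist since the inaccessibles are unbounded below the strongly compact $\kappa_{\epsilon+1}$; for $\alpha=\kappa_{\epsilon}$ any inaccessible $\beta\in(\kappa_{\epsilon},\kappa_{\epsilon+1})$ works). Then $E_{\{\beta\}}=\mathbb{P}_{\mathcal{U}_{\epsilon}\restriction\beta}\in\mathcal{I}$, and its Prikry generic is an increasing sequence $\langle P_{n}:n<\omega\rangle$ of elements of $\mathcal{P}_{\kappa_{\epsilon}}(\beta)$ with $\bigcup_{n}P_{n}=\beta$ by fineness. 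Since $\vert P_{n}\vert<\kappa_{\epsilon}\le\alpha=cf^{V}(\alpha)$, each $\sup(P_{n}\cap\alpha)<\alpha$ while $\bigcup_{n}(P_{n}\cap\alpha)=\alpha$, so $\langle\sup(P_{n}\cap\alpha):n<\omega\rangle$ is a cofinal $\omega$-sequence in $\alpha$ lying in $V[G\restriction E_{\{\beta\}}]\subseteq\mathcal{N}$, giving $cf^{\mathcal{N}}(\alpha)=\omega$. For a type $0$ cardinal $\alpha\in(\omega,\kappa_{1})$ the collapse $\mathbb{P}_{\alpha}$ makes $\alpha$ countable in the corresponding $V[G\restriction X]\subseteq\mathcal{N}$, so $cf^{\mathcal{N}}(\alpha)=\omega$; for a type $2$ cardinal the same conclusion follows from Gitik's treatment as in [\cite{Dim2011}, \textbf{Lemma 2.3}] carried out inside the appropriate $V[G\restriction X]$. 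In every case $\alpha$ has cofinality $\omega$ and is therefore singular, which proves the first assertion.

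For the ``consequently'' part I would invoke \textbf{Corollary 3.4}: the cardinals of $\mathcal{N}$ lying in $(\omega,\eta)$ are precisely the $\kappa_{\epsilon}$ together with their limit points $\mu_{\delta}=\sup_{\epsilon<\delta}\kappa_{\epsilon}$ for limit $\delta<\rho$. The $\kappa_{\epsilon}$ belong to $Reg^{\eta}$, hence have cofinality $\omega$ in $\mathcal{N}$ by the previous paragraph. For the limit cardinals $\mu_{\delta}$ the hypothesis $\rho\le\omega_{1}$ is decisive: every limit $\delta<\rho\le\omega_{1}$ is a countable ordinal, so $cf^{V}(\mu_{\delta})=cf^{V}(\delta)=\omega$, and this cofinal $\omega$-sequence already lies in $V\subseteq\mathcal{N}$; since sups of ordinals are absolute, it remains cofinal in $\mu_{\delta}$ in $\mathcal{N}$, so $cf^{\mathcal{N}}(\mu_{\delta})=\omega$. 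Thus every cardinal of $\mathcal{N}$ in $(\omega,\eta)$ is singular of cofinality $\omega$.

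The step I expect to be the main obstacle is not the cofinality computation but the bookkeeping that keeps each witnessing $\omega$-sequence inside $\mathcal{N}$: one must use exactly the restricted forcings $\mathbb{P}_{\mathcal{U}_{\epsilon}\restriction\beta}$ appearing in $\mathcal{I}$ (the full $\mathbb{P}_{\kappa_{\epsilon}}$-generic need not be symmetric) and must know, via \textbf{Lemma 3.2}, that $\kappa_{\epsilon}$ survives as a genuine cardinal of $\mathcal{N}$ so that ``cofinality $\omega$'' is meaningful there. The second delicate point is the limit cardinals, where the bound $\rho\le\omega_{1}$ is precisely what forces countable cofinality in $V$ and hence in $\mathcal{N}$; without it a limit $\mu_{\delta}$ of uncountable cofinality would require a separate argument to be singularized.
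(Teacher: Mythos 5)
Your proposal is correct and takes essentially the same route as the paper: the paper's entire proof is the observation that each type 1 or type 2 ordinal of $Reg^{\eta}$ acquires an $\omega$-Prikry sequence cofinal in it, supported inside $\mathcal{I}$ (citing [\cite{AH1991}, \textbf{Lemma 2.3}] and [\cite{Dim2011}, \textbf{Lemma 2.26}]), which is precisely what you verify by hand via the sequence $\langle\sup(P_{n}\cap\alpha):n<\omega\rangle$ coming from the restricted forcing $\mathbb{P}_{\mathcal{U}_{\epsilon}\restriction\beta}$. You additionally make explicit two points the paper leaves implicit --- the type 0 case, and the ``consequently'' step via \textbf{Corollary 3.4} together with the fact that limit points of the sequence already have cofinality $\omega$ in $V$ because $\rho\leq\omega_{1}$ --- both of which are correct and consistent with the paper's intent.
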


\begin{proof}Let $\alpha$ is in $Reg^{\eta}$ is either of type 1 or type 2. There is a $\omega$-Prikry sequence cofinal in $\alpha$ supported by $\{\alpha\}$ following  [\cite{AH1991}, \textbf{Lemma 2.3}] and  [\cite{Dim2011}, \textbf{Lemma 2.26}].
\end{proof}

\begin{lem}{\em In $\mathcal{N}$, all cardinals in $(\omega,\eta)$ carry a Rowbottom filter.}\end{lem}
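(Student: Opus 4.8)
The plan is to exhibit, for each cardinal $\kappa$ of $\mathcal{N}$ lying in $(\omega,\eta)$, a Rowbottom filter $\mathcal{F}_{\kappa}\in\mathcal{N}$. By \textbf{Corollary 3.4} such a $\kappa$ is either one of the ground-model strongly compact cardinals $\kappa_{\epsilon}$ ($0<\epsilon<\rho$) or a limit point of $\langle\kappa_{\epsilon}:0<\epsilon<\rho\rangle$, and in both cases $cf^{\mathcal{N}}(\kappa)=\omega$ by \textbf{Lemma 3.5}. The successor case $\kappa=\kappa_{\epsilon}$ is where the strongly compact Prikry forcing does the essential work, so I treat it first. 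Since $\kappa_{\epsilon}$ is strongly compact, hence measurable, in $V$, fix a normal measure $W$ on $\kappa_{\epsilon}$ in $V$ and set $\mathcal{F}_{\kappa_{\epsilon}}=\{X\in\mathcal{P}(\kappa_{\epsilon})^{\mathcal{N}}:A\subseteq X\text{ for some }A\in W\}$. Because $W\in V\subseteq\mathcal{N}$, this is definable in $\mathcal{N}$ from the parameter $W$, so $\mathcal{F}_{\kappa_{\epsilon}}\in\mathcal{N}$, and it is a proper filter on $\kappa_{\epsilon}$ since $W$ is. To verify the Rowbottom property, let $f:[\kappa_{\epsilon}]^{<\omega}\to\mu$ with $\mu<\kappa_{\epsilon}$ be given in $\mathcal{N}$; by \textbf{Lemma 3.1} there is $X\in\mathcal{I}$ with $f\in V[G\restriction X]$.

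First I would isolate the part of $V[G\restriction X]$ responsible for $f$. Writing the forcing as a product, the factors attached to cardinals above $\kappa_{\epsilon}$ add no new subsets of $\kappa_{\epsilon}$ and may be discarded; the factors attached to cardinals below $\kappa_{\epsilon}$ form a forcing $\mathbb{Q}$ of size $<\kappa_{\epsilon}$, since $\kappa_{\epsilon}$ is inaccessible in $V$; and by the distinctness built into $\mathcal{I}$ there is at most one factor $\mathbb{P}_{\mathcal{U}_{\epsilon}\restriction\alpha}$ with critical point $\kappa_{\epsilon}$. Hence $f\in V[\mathbb{Q}][H]$ with $H$ generic for $\mathbb{P}_{\mathcal{U}_{\epsilon}\restriction\alpha}$, this Prikry factor possibly being absent. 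By L\'evy--Solovay, $W$ generates a normal measure $\bar{W}$ on $\kappa_{\epsilon}$ in $V[\mathbb{Q}]$, and every $\bar{W}$-large set contains a $W$-large set. If the Prikry factor is absent then $f\in V[\mathbb{Q}]$, where $\kappa_{\epsilon}$ remains measurable, and Rowbottom's partition theorem applied to $\bar{W}$ supplies at once a set $A'\in\bar{W}$ with $\vert f^{''}[A']^{<\omega}\vert\leq\omega$.

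The substantive case is when the Prikry factor is present. Here I would run the Prikry-style homogenisation of \cite{AH1991}: working in $V[\mathbb{Q}]$ and using the Prikry-like lemma for $\mathbb{P}_{\mathcal{U}_{\epsilon}\restriction\alpha}$ (\cite{AH1991}, \textbf{Lemma 1.1}) together with the partition property of $\bar{W}$, one produces a single set $A'\in\bar{W}$ with $\vert f^{''}[A']^{<\omega}\vert\leq\omega$. In either case, choosing $A\in W$ with $A\subseteq A'$ gives $A\in\mathcal{F}_{\kappa_{\epsilon}}$ and $\vert f^{''}[A]^{<\omega}\vert\leq\omega$, which is exactly the Rowbottom property for $\mathcal{F}_{\kappa_{\epsilon}}$. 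The step I expect to be the main obstacle is precisely this homogenisation: the Prikry property decides only one statement at a time along a direct extension, so turning $f$—which depends on the generic $H$—into a genuine ground-model colouring homogenisable by $\bar{W}$ cannot be carried out tuple by tuple, since there are $\kappa_{\epsilon}$ many tuples and only $\kappa_{\epsilon}$-completeness is available. One must instead homogenise one arity at a time and fuse the $\omega$ many resulting measure-one sets using the $\kappa_{\epsilon}$-completeness of $\bar{W}$, as in \cite{AH1991}; checking that the values forced along direct extensions cohere into such a colouring is the delicate point.

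Finally, for a limit cardinal $\kappa=\sup\{\kappa_{\epsilon'}:\kappa_{\epsilon'}<\kappa\}$ of $\mathcal{N}$, I would follow Gitik's treatment of type $2$ cardinals as used for \textbf{Lemma 3.3} (\cite{Dim2011}, \textbf{Chapter 2}, $\S 4$): the interval below $\kappa$ is collapsed so that a Rowbottom filter on $\kappa$ is read off from the fine measures of the strongly compact cardinals cofinal in $\kappa$, exactly as in Gitik's model. Together the two cases yield a Rowbottom filter on every cardinal of $\mathcal{N}$ in $(\omega,\eta)$.
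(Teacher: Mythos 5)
The successor case $\kappa=\kappa_{\epsilon}$ of your proposal has a genuine gap, and it sits exactly at the step you flag as delicate. The Apter--Henle machinery of [\cite{AH1991}, \textbf{Lemma 3.1}] does \emph{not} produce a homogeneous set lying in $\bar{W}$, nor in any filter generated by ground-model sets. What it produces is a set of the form $X_{H,h}$, woven diagonally from the generic: inside the $i$-th Prikry interval, $X_{H,h}$ consists of the points of $h(p_{0},\ldots,p_{i-1})$, where the index is the generic stem itself. Such a set need not contain any member of $\bar{W}$: for instance, if $h(\vec{s})$ is the tail of $\kappa_{\epsilon}$ above $\max(\vec{s}\cap\kappa_{\epsilon})$, then $X_{H,h}$ is disjoint from the projected Prikry points, whereas by Mathias-style genericity \emph{every} $\bar{W}$-large set contains all but finitely many of them. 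Taking $h$ constant shows every $W$-set is an $X_{H,h}$, so your filter $\mathcal{F}_{\kappa_{\epsilon}}$ is a proper subfilter of the filter $\mathcal{U}_{H}=\{X_{H,h}:h\}$ that the technique actually proves Rowbottom; and Rowbottomness does not pass to subfilters, since the witnesses must lie \emph{in} the filter. The underlying obstruction is that the forced value of $F$ on a tuple is controlled only for \emph{appropriate} pairs, where both the tuple entries and the interleaved Prikry points must lie in measure-one sets indexed by stems; a ground-model $A\in\bar{W}$ cannot be arranged to meet constraints indexed by all possible future stems, as that would require intersecting far more than $\kappa_{\epsilon}$ many measure-one sets (or a normality property of the fine measure $\mathcal{U}$ on $\mathcal{P}_{\kappa_{\epsilon}}(\kappa_{\epsilon+1})$ which strongly compact measures need not have). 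So the sentence ``one produces a single set $A'\in\bar{W}$ with $\vert f''[A']^{<\omega}\vert\leq\omega$, as in \cite{AH1991}'' asserts precisely what \cite{AH1991} does not prove. The paper instead takes $\mathcal{U}_{G'}$ as the Rowbottom filter and then needs the extra observation that $X_{H,f}$ depends only on $H\restriction\kappa_{\epsilon}$, so that this generic-dependent filter belongs to $V[G\restriction E_{\{\kappa_{\epsilon}\}}]\subseteq\mathcal{N}$; your ground-model filter was chosen exactly to avoid that last step, but it forfeits the Rowbottom property that the technique delivers.

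Your limit case is also not what the paper does, and as written it does not suffice. Gitik's type-2 treatment in [\cite{Dim2011}, \textbf{Chapter 2}, $\S 4$] concerns collapsing the \emph{regular} cardinals sitting above singular limits of the sequence; it does not by itself produce a Rowbottom filter on the limit cardinal $\kappa$. The paper's argument is: write $V[G\restriction X]=V[G_{1}][G_{2}]$ with $\vert\mathbb{P}_{1}\vert<\kappa$ and $G_{2}$ adding no bounded subsets of $\kappa$; use \cite{LS1967} to see that $\kappa$ remains a limit of measurables $\chi_{i}$ whose ground-model normal measures $\mu_{i}$ generate normal measures $\mu_{i}'$; and then apply [\cite{Kan2003}, \textbf{Theorem 8.7}] to the filter $\mathcal{F}=\{A\subseteq\kappa:\exists n\,\forall i\geq n\,[A\cap\chi_{i}\in\mu_{i}]\}$, checking that homogeneous sets for $\mathcal{F}^{*}$ refine to sets in $\mathcal{F}$. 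The measurables cofinal in $\kappa$ exist here because $\kappa$ is a limit of the strongly compact $\kappa_{\epsilon}$'s; that is the point of the whole construction, and it is this filter, not one ``read off from the fine measures,'' that witnesses Rowbottomness at limits.
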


\begin{proof}
Let $\kappa$ be a limit cardinal in $\mathcal{N}$. First we prove that if $V[G\restriction X]$ is an intermediate model between $V$ and $\mathcal{N}$ for some $X\in \mathcal{I}$, then we can write $V[G\restriction X]$ as $V[G_{1}][G_{2}]$ where $G_{1}$ is $V$-generic over a forcing notion $\mathbb{P}_{1}$ such that $\vert \mathbb{P}_{1} \vert<\kappa$ and $G_{2}$ is $V[G_{1}]$-generic over a forcing notion $\mathbb{P}_{2}$ such that $\mathbb{P}_{2}$ does not add bounded subsets to $\kappa$. We may imagine $V[G\restriction X]$ as $V[G\restriction E_{e_{0}}][G\restriction E_{e_{1}}][G\restriction E_{e_{2}}]$ where $e_{0}$ is some finite subset of $Reg^{\eta}_{0}$, $e_{2}$ is some finite subset of $Reg^{\eta}_{2}$ and $e_{1}$ is a finite collection of inaccessible cardinals in $Reg^{\eta}_{1}$ such that for each $\alpha_{i}\in e_{1}$, there is a distinct $\epsilon_{\alpha_{i}}\in Ord$ such that $\alpha_{i}\in [\kappa_{\epsilon_{\alpha_{i}}},\kappa_{\epsilon_{\alpha_{i}}+1})$.

{\bf{\underline{Step 1}:}} Clearly, $\vert E_{e_{0}}\vert<\kappa$. 

{\bf{\underline{Step 2}:}}
Let $\{\alpha_{1},...,\alpha_{m}\}$ be an increasing enumeration of $e_{1}$, and let for each $1\leq i\leq m$ there is a distinct $\epsilon_{\alpha_{i}}$ such that $\alpha_{i}\in [\kappa_{\epsilon_{\alpha_{i}}},\kappa_{\epsilon_{\alpha_{i}}+1})$. Let $1\leq j\leq m$ be the greatest such that $\kappa>\alpha_{j}$. We can write $E_{e_{1}}$ as $\Pi_{i=1,...,j }\mathbb{P}_{\mathcal{U}_{\epsilon_{\alpha_{i}}}\restriction \alpha_{i}}\times \Pi_{i=j+1,...,m}\mathbb{P}_{\mathcal{U}_{\epsilon_{\alpha_{i}}}\restriction \alpha_{i}}$ where for each $1\leq i\leq m$, $\mathcal{U}_{\epsilon_{\alpha_{i}}}\restriction \alpha_{i}$ is the fine measure on $\mathcal{P}_{\kappa_{\epsilon_{\alpha_{i}}}}(\alpha_{i})$ and $\mathbb{P}_{\mathcal{U}_{\epsilon_{\alpha_{i}}}\restriction \alpha_{i}}$ is the strongly compact Prikry forcing with respect to the fine measure $\mathcal{U}_{\epsilon_{\alpha_{i}}}\restriction \alpha_{i}$.
Clearly, $\Pi_{i=j+1,...,m}\mathbb{P}_{\mathcal{U}_{\epsilon_{\alpha_{i}}}\restriction \alpha_{i}}$ do not add any bounded subset of $\kappa$ following the Prikry like lemma  for the strongly compact Prikry forcing.
Moreover $\vert \Pi_{i=1,...,j}\mathbb{P}_{\mathcal{U}_{\epsilon_{\alpha_{i}}}\restriction \alpha_{i}}\vert<\kappa$ since for each $i=1,...,j$, $\vert \mathbb{P}_{\mathcal{U}_{\epsilon_{\alpha_{i}}}\restriction \alpha_{i}}\vert<\kappa_{\epsilon_{\alpha_{i}}+1}$ (c.f. [\cite{AH1991}, \textbf{Lemma 2.5}]).

{\bf{\underline{Step 3}:}} Following [\cite{ADK2016}, \textbf{Lemma 2.31}] (or more appropriately the works done in [\cite{Dim2011}, \textbf{$\S$ 4}, \textbf{chapter 2}], $E_{e_{2}}$ can be written as $E_{e_{2}\cap\kappa}\times E_{e_{2}\backslash\kappa}$ where $\vert E_{e_{2}\cap\kappa}\vert<\kappa$ and $E_{e_{2}\backslash\kappa}$ does not add bounded subsets to $\kappa$ following the Prikry like lemma for the injective tree Prikry forcing from [\cite{Dim2011}, \textbf{Lemma 2.24}]. 

{\bf{\underline{All limit cardinals in $(\omega,\eta)$ carry a Rowbottom filter in $\mathcal{N}$}:}} By [\cite{Dim2011}, \textbf{Lemma 2.31}] (or more appropriately the arguments in [\cite{ADK2016}, \textbf{Lemma 2.5}]) and [\cite{Kan2003}, \textbf{Theorem 8.7}], we can see that all limit cardinals in $(\omega,\eta)$ are Rowbottom cardinals carrying a Rowbottom filter. For reader's convenience, we write down the proof. 

{\bf{\underline{Step 1}:}} Let $\kappa$ be a limit cardinal of $(\omega, \eta)$ in $\mathcal{N}$. We first prove that there is some $X_{0} \in \mathcal{I}$ such that $V[G\restriction X_{0}] \models ``cf(\kappa) = \omega$ and $\kappa = sup(\chi_{i} : i < \omega)$, where each $\chi_{i}$ is measurable". By \textbf{Corollary 3.4}, $\kappa$ is a limit in the sequence $\{\kappa_{\epsilon}:\epsilon <\rho\}$ in $V$.
Also by \textbf{Lemma 3.5}, the interval $(\omega,\eta)$ only contains singular cardinals of cofinality $\omega$ in $\mathcal{N}$. So, $cf(\kappa)=\omega$ in $\mathcal{N}$. By \textbf{Lemma 3.1}, there is a $X_{0}
\in \mathcal{I}$ such that $V[G\restriction X_{0}] \models ``cf(\kappa) = \omega$ and $\kappa =sup(\chi_{i}:i <\omega)$, where each $\chi_{i}$ is measurable in $V$". 
However, as proved before, we can write $V[G\restriction X_{0}] = V[H_{0}][H_{1}]$, where $H_{0}$ is $V$-generic over a partial ordering $\mathbb{P}_{1}$ such that $\vert \mathbb{P}_{1}\vert < \kappa$, and $H_{1}$ is $V[H_{0}]$-generic over a partial ordering $\mathbb{Q}$ which adds no bounded subsets of $\kappa$. By the results of \cite{LS1967}, it is still the case that $V[H_{0}] \models ``\kappa$ is a limit of measurable cardinals''. Because forcing with $\mathbb{Q}$ adds no bounded subsets
of $\kappa$, $V[H_{0}][H_{1}] = V[G\restriction X_{0}]$ as desired. For the rest of the proof, we fix $X_{{0}}$, $\{\chi_{i}: i < \omega\}\in V[G\restriction X_{0}]$, and $\{\mu_{i}: i < \omega\} \in V[G\restriction X_{0}]$ such that $V[G\restriction X_{0}]\models ``\kappa = sup(\chi_{i}:i < \omega)$, where each $\chi_{i}$ is a measurable cardinal, and each $\mu_{i}$ is a normal measure over $\chi_{i}$”. We also define $\mathcal{F}\in V[G\restriction X_{0}]$ as follows.
\begin{equation}
    \mathcal{F} = \{A \subseteq \kappa : \exists n < \omega \forall i \geq n [A \cap \chi_{i} \in \mu_{i}]\}
\end{equation}
Clearly, $\mathcal{F}$ generates a filter (in any model of ZF in which it is a
member).

{\bf{\underline{Step 2}:}} We show that $\kappa$ is a Rowbottom cardinal carrying a Rowbottom filter in $\mathcal{N}$. Let $\gamma<\kappa$ be arbitrary and $f:[\kappa]^{<\omega}\rightarrow\gamma$ be a partition function. Since $f$ can be coded as a subset of $\kappa$ by \textbf{Lemma 3.1}, there is a $X\in \mathcal{I}$ such that $f\in V[G\restriction X]$. Without loss of generality, by coding if necessary, we may assume in addition that $V[G\restriction X]\supseteq V[G\restriction X_{0}]$. As before, we have seen that if $V[G\restriction X]$ is an intermediate model between $V$ and $\mathcal{N}$ for some $X\in \mathcal{I}$, then we can write $V[G\restriction X]$ as $V[G_{1}][G_{2}]$ where $G_{1}$ is $V$-generic over a forcing notion $\mathbb{P}_{1}$ such that $\vert \mathbb{P}_{1} \vert<\kappa$ and $G_{2}$ is $V[G_{1}]$-generic over a forcing notion $\mathbb{P}_{2}$ such that $\mathbb{P}_{2}$ does not add bounded subsets to $\kappa$. 
Therefore, by the results of \cite{LS1967}, we may further assume that in $V[G\restriction X]$, a final segment $\mathbb{F}$ of $\langle\chi_{i}:i< \omega\rangle$ is composed of measurable
cardinals, and that for any $i$ such that $\chi_{i} \in \mathbb{F}$, $\mu'_{i}$ defined in $V[G\restriction X]$ by 
\begin{equation}
    \mu'_{i} = \{A \subseteq \chi_{i}:\exists Y \in \mu_{i} [Y \subseteq A]\}
\end{equation}
is a normal measure over $\chi_{i}$. Let $n_{0}$ be least such that $\chi_{n_{0}}\in \mathbb{F}$. In $V[G\restriction X]$ define the following set.
\begin{equation}
    \mathcal{F}^{*} = \{A\subseteq \kappa: \exists n \geq n_{0} \forall i \geq n[A \cap \chi_{i} \in \mu'_{i}]\} 
\end{equation}

By [\cite{Kan2003}, \textbf{Theorem 8.7}] for some $Z^{*}\in \mathcal{F}^{*}$, $Z^{*}$ is homogeneous
for $f$. By the definitions of $\mathcal{F}$ and $\mathcal{F}^{*}$ and the fact that every $\mu'_{i}$ measure 1 set contains a $\mu_{i}$ measure 1 set for $\chi_{i}\in \mathcal{F}$, it then immediately follows that for some $Z\in\mathcal{F}$, $Z\subseteq Z^{*}$, $Z$ is homogeneous for $f$. Thus, $F\in V[G\restriction X] \subseteq \mathcal{N}$ generates a Rowbottom filter for $\kappa$ in $\mathcal{N}$. 


{\bf{\underline{All successor cardinals in $(\omega,\eta)$ carry a Rowbottom filter in $\mathcal{N}$}:}} Adopting the {\em appropriate automorphism technique} from [\cite{AH1991}, \textbf{Lemma 3.1}] we observe that in $\mathcal{N}$, all the successor cardinals in $(\omega,\eta)$ can carry Rowbottom filter. In $\mathcal{N}$, if a cardinal $\kappa$ in $(\omega,\eta)$ is a successor cardinal, then there is an $\epsilon<\rho$ such that $\kappa=\kappa_{\epsilon}$. We show that $\kappa_{\epsilon}$ carries a Rowbottom filter in $V[G\restriction E_{\{\kappa_{\epsilon}\}}]\subset \mathcal{N}$. 
Firstly, we see that $\kappa_{\epsilon}$ carries a Rowbottom filter in $V[G']$ where $G'$ is a $V$-generic filter over $\mathbb{P}_{\kappa_{\epsilon}}$. Suppose for the sake of contradiction $p=\langle p_{0},...,p_{r},u\rangle\in G'$ forces that $F:[\mathcal{P}_{\kappa_{\epsilon}}(\kappa_{\epsilon+1})]^{<\omega}\rightarrow \gamma<\kappa_{\epsilon}$ is a counter example to the Rowbottomness of $\kappa_{\epsilon}$. 
Let $\mathcal{U}$ be the fine measure on $\mathcal{P}_{\kappa_{\epsilon}}(\kappa_{\epsilon+1})$ such that $\mathbb{P}_{\kappa_{\epsilon}}=\mathcal{P}_{\mathcal{U}}$.

{\bf{\underline{(Step 1) Defining $\mathcal{U}_{\kappa_{\epsilon}}$ and $\mathcal{F}_{\kappa_{\epsilon}}$}:}} 
Let $k:\mathcal{P}_{\kappa_{\epsilon}}(\kappa_{\epsilon+1})\rightarrow \kappa_{\epsilon}$ be a map. We define $\mathcal{U}_{\kappa_{\epsilon}}$ to be the push-forward ultrafilter $k_{*}(\mathcal{U})$. We may assume that $\mathcal{U}_{\kappa_{\epsilon}}$ is a normal measure on $\kappa_{\epsilon}$. Otherwise, we can change $\mathcal{U}_{\kappa_{\epsilon}}$ so that it becomes normal, as follows. 
\begin{itemize}
        \item Let $r:\mathcal{P}_{\kappa_{\epsilon}}(\kappa_{\epsilon+1})\rightarrow \kappa_{\epsilon}$ be the least function in the ultrapower of $\mathcal{P}_{\kappa_{\epsilon}}(\kappa_{\epsilon+1})$ such that $r$ is not a constant function on a set in $\mathcal{U}$, but $r(p)<k(p)$ on a set in $\mathcal{U}$.  
        \item Define a map $l:\mathcal{P}_{\kappa_{\epsilon}}(\kappa_{\epsilon+1})\rightarrow \mathcal{P}_{\kappa_{\epsilon}}(\kappa_{\epsilon+1})$ as follows.
        
        \begin{center}
            $l(p)=(p\backslash \kappa_{\epsilon})\cup (p\cap r(p))$.
        \end{center}
        
        We can see that $l_{*}(\mathcal{U})$ is a fine measure on $\mathcal{P}_{\kappa_{\epsilon}}(\kappa_{\epsilon+1})$ and $k_{*}(l_{*}(\mathcal{U}))$ is a normal measure on $\kappa_{\epsilon}$. 
\end{itemize}
    
Let, $\mathcal{F}_{\kappa_{\epsilon}}=\{f:f:[\mathcal{P}_{\kappa_{\epsilon}}(\kappa_{\epsilon+1})]^{<\omega}\rightarrow\mathcal{U}_{\kappa_{\epsilon}}\}$. 
    
{\bf{\underline{(Step 2) Defining a subset $X_{H,f}$ of $\kappa_{\epsilon}$ and $\mathcal{U}_{H}$}:}}
For any $f\in \mathcal{F}_{\kappa_{\epsilon}}$ and any $H$ which is $V$-generic over $\mathbb{P}_{\mathcal{U}}$ we define the following subset $X_{H,f}$ of $\kappa_{\epsilon}$ in $V[H]$.
\begin{center}
    $X_{H,f}=[f(\emptyset)\cap (p_{0}\cap \kappa_{\epsilon})]\cup [f(p_{0})\cap [(p_{1}\cap \kappa_{\epsilon})\backslash(p_{0}\cap \kappa_{\epsilon})]]\cup [f(p_{0},p_{1})\cap [(p_{2}\cap \kappa_{\epsilon})\backslash(p_{1}\cap \kappa_{\epsilon})]]\cup..$.
\end{center}
We define, $\mathcal{U}_{H}=\{X_{H,f}:{f\in \mathcal{F}_{\kappa_{\epsilon}}}\}$.

We can clearly observe that $\mathcal{U}_{H}$ is a filter on $\kappa_{\epsilon}$. 
We recall $\mathcal{F}=\{f: f$ is a function from $[\mathcal{P}_{\kappa_{\epsilon}}(\kappa_{\epsilon+1})]^{<\omega}$ to $\mathcal{U}\}$ from the definition of $\mathbb{P}_{\mathcal{U}}$.
Let $\mathcal{T}$ be the collection of finite sequences of 0's and 1's. 

{\bf{\underline{(Step 3) Defining an appropriate pair}:}}
For $\pi\in \mathcal{T}$, $g\in \mathcal{F}$, $h\in \mathcal{F}_{\kappa_{\epsilon}}$, $\sigma=\langle s_{0},...,s_{k}\rangle\in [\mathcal{P}_{\kappa_{\epsilon}}(\kappa_{\epsilon+1})]^{<\omega}$ with each $s_{l}\cap \kappa_{\epsilon}$ a cardinal for $0\leq l\leq k$, $\tau=\langle t_{0},...,t_{n}\rangle\in [\kappa_{\epsilon}]^{<\omega}$, we say $\langle \sigma,\tau\rangle$ is {\em appropriate} for $\pi$, $g$, $h$ if and only if the following holds.
\begin{itemize}
    \item $s_{0}\cap \kappa_{\epsilon} < s_{1}\cap \kappa_{\epsilon}<...$,
    \item $t_{0}<t_{1}...$,
    \item $t_{i}\not= s_{j}\cap \kappa_{\epsilon}$ for all $i$ and $j$.
    \item In case $\{t_{i}\}_{i<n}$, $\{s_{j}\cap \kappa_{\epsilon}\}_{j<k}$ are arranged in order, we have a sequence $\rho$ with the following.
    \begin{itemize}
        \item len($\rho$) = len ($\pi$).
        \item If $\pi(i)=0$ then $\rho(i)=t_{j}$ for some $j$ and $\rho(i)\in h(t_{0},...,t_{j-1})$.
        \item If $\pi(i)=1$ then $\rho(i)\not \in \tau$ and $\rho(i)\in g(t_{0},...,t_{j})$ where $t_{j}$ is the greatest member of $\tau$ below $\rho(i)$.
    \end{itemize}
\end{itemize}

Similar to the claim in the proof of [\cite{AH1991}, \textbf{Lemma 3.1}] we can observe the following.

\begin{claim}
{\em for all $\pi\in\mathcal{T}$ and for all $\sigma\in [\mathcal{P}_{\kappa_{\epsilon}}(\kappa_{\epsilon+1})]^{<\omega}$ extending $\langle p_{0},...p_{r}\rangle$ there 
are $g\in\mathcal{F}$, $h\in \mathcal{F}_{\kappa_{\epsilon}}$, $\alpha<\kappa_{\epsilon}$ such that for all $\langle \sigma',\tau\rangle$ 
appropriate for $\pi, g, h$, $\langle \sigma\frown\sigma',g\rangle\Vdash ``F(\tau)=\alpha"$.}
\end{claim}

Now let $\sigma$ be $\langle p_{0},...,p_{r}\rangle$, and choose $g_{\pi},h_{\pi},\alpha_{\pi}$ for each $\pi\in \mathcal{T}$. Consider the following.

\begin{itemize}
    \item $g$ be the intersection $(\cap g_{\pi})\cap f$,
    \item $h$ be $\cap h_{\pi}$,
    \item $Z=\{\alpha_{\pi}\}_{\pi\in\mathcal{T}}$.
\end{itemize}

Let $H$ be a $V$-generic filter over $\mathbb{P}_{\mathcal{U}}$ such that $\langle \sigma,g\rangle\in H$. For any $\tau\in [X_{H,h}]^{<\omega}$, we can find $\sigma'$ and a $\pi$ such that $\langle \sigma', \tau\rangle$ is appropriate for $g_{\pi}$, $h_{\pi}$ and $\pi$. Thus $\langle \sigma\frown\sigma',g_{\pi}\rangle\Vdash ``F(\tau)=\alpha_{\pi}"$ and so $\langle \sigma\frown\sigma',g\rangle\Vdash ``F(\tau)\in Z"$ and $\langle \sigma,g\rangle\Vdash ``F''[X_{H,h}]^{<\omega}\subseteq Z$''. Now $\vert Z \vert\leq\omega$ contradicts the assumption that $\langle \sigma,f\rangle$ forces that $F$ is a counterexample to Rowbottomness of $\kappa_{\epsilon}$. Consequently, we can observe that $\mathcal{U}_{G'}$ is a Rowbottom filter on $\kappa_{\epsilon}$ in $V[G']$. 

Now, the definition of $X_{H,f}$ above for a $V$-generic filter $H$ over $\mathbb{P}_{\kappa_{\epsilon}}$, depends only on $H\restriction \kappa_{\epsilon}$. Consequently, $\mathcal{U}_{G'}$ is in $V[G\restriction E_{\{\kappa_{\epsilon}\}}]$. 
\end{proof}

Arguments from [\cite{Dim2011}, \textbf{Lemma 2.20}, \textbf{Lemma 2.30}] guarantees that all cardinals in the interval $(\omega,\eta)$ are almost Ramsey. 

\begin{question}{\em (asked in \cite{ADK2016}).}
{\em Is it possible to remove the additional assumption that `every strongly compact cardinals are the limit of measurable cardinals' from} [\cite{ADK2016}, \textbf{Theorem 1.1}]?
\end{question}
\section{Proving Theorem 1.2}


{\bf{\underline{Defining the ground model ($V$)}:}}
Let $V$ be a model of ZFC where $f:\omega\rightarrow 2$ be an arbitrary given function. Let $\langle\kappa_n: 1\leq n<\omega\rangle$ be a sequence of strongly compact cardinals, $\eta$ be the limit of the sequence $\langle\kappa_n: 1\leq n<\omega\rangle$ and $Reg^{\eta}$ be the set of regular cardinals in $(\omega,\eta)$.
Let $Reg^{\eta}_{0}$ be the set of all regular cardinals in $(\omega, \kappa_{1})$. We define the following sets, $Reg^{\eta}_{1}=\{\alpha\in Reg^{\eta} : \exists n\in\omega, \alpha\in [\kappa_{n+1},\kappa_{n+2})$ and $f(n)=0 \}$ and $Reg^{\eta}_{2}=\{\alpha\in Reg^{\eta} : \exists n\in\omega, \alpha\in [\kappa_{n+1},\kappa_{n+2})$ and $f(n)=1\}$. 

{\bf{\underline{Defining a symmetric inner model of the forcing extension of $V$}:}}

{\bf{\underline{Defining the partially ordered set}:}}
 \begin{itemize}
    \item  Let $\mathbb{P}_{\alpha}=\{p:\omega\rightharpoonup \alpha:\vert p \vert <\omega\}$ for every $\alpha\in Reg^{\eta}_{0}$ and $\mathbb{P}_{0}=\Pi^{fin}_{\alpha\in Reg^{\eta}_{0}}\mathbb{P}_{\alpha}$.
    
    \item Given $n<\omega$, if $f(n)=0$
    let $\mathcal{U}$ be the fine measure on $\mathcal{P}_{\kappa_{n+1}}(\kappa_{n+2})$. We let $\mathbb{P}_{\kappa_{n+1}}$ to be the strongly compact Prikry forcing $\mathbb{P}_{\mathcal{U}}$. 
    
    Let $\mathbb{P}_{1}=\Pi^{fin}_{n<\omega,f(n)=0}\mathbb{P}_{\kappa_{n+1}}$ be the finite support product of $\mathbb{P}_{\kappa_{n+1}}$ when $f(n)=0$.
    
    \item Given $n<\omega$, if $f(n)=1$ and $\alpha\in [\kappa_{n+1},\kappa_{n+2})$ then we let $\mathbb{P}_{\alpha}=\{p:\kappa_{n+1}\rightharpoonup \alpha:\vert p \vert <\kappa_{n+1}\}$. Let $\mathbb{P}_{2}=\Pi^{fin}_{\alpha\in Reg^{\eta}_{2}}\mathbb{P}_{\alpha}$.
\end{itemize}

Let the desired forcing notion $\mathbb{P}$ be the product of $\mathbb{P}_{0}$, $\mathbb{P}_{1}$ and $\mathbb{P}_{2}$. Let $G$ be $V$-generic over $\mathbb{P}$.

{\bf{\underline{Defining the symmetric inner model}:}}
We consider our symmetric inner model $\mathcal{N}$ to be the least model of ZF extending $V$ such that $V[G\restriction X]\subseteq \mathcal{N}$ for each $X\in \mathcal{I}$ where $\mathcal{I}$ is described as follows.
\begin{itemize}
\item For every finite $e_{0}\subseteq Reg^{\eta}_{0}$, we define $E_{e_{0}}=\{p\restriction e_{o} : p\in \mathbb{P}_{0}\}$.

    \item 
    For $m<\omega$ and $e_{1}=\{\alpha_1,...,\alpha_m\}\subseteq Reg^{\eta}_{1}$ a sequence of inaccessible cardinals in $Reg^{\eta}_{1}$ such that for each  $\alpha_{i}\in e_{1}$, there is a distinct $\epsilon_{\alpha_{i}}\in Ord$ such that $\alpha_{i}\in [\kappa_{\epsilon_{\alpha_{i}}},\kappa_{\epsilon_{\alpha_{i}}+1})$,\footnote{i.e., if $\alpha_{i}\not=\alpha_{j}\in e_{1}$, $\alpha_{i}\in [\kappa_{\epsilon_{\alpha_{i}}},\kappa_{\epsilon_{\alpha_{i}+1}})$ and $\alpha_{j}\in [\kappa_{\epsilon_{\alpha_{j}}},\kappa_{\epsilon_{\alpha_{j}}+1})$ then $\epsilon_{\alpha_{i}}\not=\epsilon_{\alpha_{j}}$.} we define $E_{e_{1}}=\Pi_{i\in \{1,...,m\}}\mathbb{P}_{\mathcal{U}_{\epsilon_{\alpha_{i}}}\restriction \alpha_{i}}$ 
    where $\mathcal{U}_{\epsilon_{\alpha_{i}}}\restriction \alpha_{i}$ is the fine measure on $\mathcal{P}_{\kappa_{\epsilon_{\alpha_{i}}}}(\alpha_{i})$ induced by some fine measure
    $\mathcal{U}_{\epsilon_{\alpha_{i}}}$ on
    $\mathcal{P}_{\kappa_{\epsilon_{\alpha_{i}}}}(\kappa_{\epsilon_{\alpha_{i}}+1})$ and
    $\mathbb{P}_{\mathcal{U}_{\epsilon_{\alpha_{i}}}\restriction \alpha_{i}}$ is the strongly compact Prikry forcing with respect to the fine measure
    $\mathcal{U}_{\epsilon_{\alpha_{i}}}\restriction \alpha_{i}$.
    
    \item 
    For every finite $e_{2}\subseteq Reg^{\eta}_{2}$, we define $E_{e_{2}}=\{p\restriction e_{2} : p\in \mathbb{P}_{2}\}$.

    \item Let $\mathcal{I}=\{E_{e_{0}}\times E_{e_{1}}\times E_{e_{2}}:  e_{0}$ is any  finite subset of $Reg^{\eta}_{0}$, $e_{2}$ is any  finite subset of $Reg^{\eta}_{2}$ and $e_{1}$ is any finite collection of inaccessible cardinals in $Reg^{\eta}_{1}$ such that for each $\alpha_{i}\in e_{1}$, there is a distinct $\epsilon_{\alpha_{i}}\in Ord$ such that $\alpha_{i}\in [\kappa_{\epsilon_{\alpha_{i}}},\kappa_{\epsilon_{\alpha_{i}}+1}) \}$.
    
\end{itemize}
Formally, we define $\mathcal{N}$ as follows.
Let $\mathcal{L}$ be the forcing language with respect to $\mathbb{P}$. Let $\mathcal{L}_{1}\subseteq \mathcal{L}$ be a ramified sublanguage which contains symbols $\overrightarrow{v}$ for each $v\in V$, a predicate symbol $\overrightarrow{V}$ (to be interpreted as $\overrightarrow{V}(\overrightarrow{v})\leftrightarrow v\in V$),  and symbols $\overline{G\restriction X}$ for each $X\in \mathcal{I}$. $\mathcal{N}$ is then defined in $V[G]$ as follows.
\begin{itemize}
    \item $\mathcal{N}_{0}=\emptyset$.
    \item $\mathcal{N}_{\alpha+1}=\{x\subseteq \mathcal{N}_{\alpha}: x\in V[G]$ and is definable over $\langle\mathcal{N}_{\alpha}, \epsilon, c\rangle_{c\in \mathcal{N}_{\alpha}}$ by a formula $\phi\in\mathcal{L}_{1}$ of rank $\leq\alpha\}$.
    \item $\mathcal{N}_{\alpha}=\cup_{\beta<\alpha} \mathcal{N}_{\beta}$ for $\alpha$ a limit ordinal.
    \item $\mathcal{N}=\cup_{\alpha\in Ord} \mathcal{N}_{\alpha}$.
\end{itemize}

We recall the homogeneity of [\cite{Dim2011}, \textbf{Chapter 1}, \textbf{$\S$3}], the homogeneity of strongly compact Prikry forcing from [\cite{AH1991}, \textbf{Lemma 2.1}] and the fact that finite support product of weakly (cone) homogeneous forcing notions are weakly (cone) homogeneous. Consequently, we can obtain the desired homogeneity of $\mathbb{P}$ and observe the following lemma.

\begin{lem}{\em If $X'$ is a set of ordinals in $\mathcal{N}$, then there is some $X\in \mathcal{I}$, such that $X'\in V[G\restriction X]$.}\end{lem}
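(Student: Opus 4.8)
The plan is to follow the template of \textbf{Lemma 3.1} and of [\cite{Dim2011}, \textbf{Lemma 1.29}]: first show that the index set $\mathcal{I}$ is directed under the ordering $X\preceq Y \iff V[G\restriction X]\subseteq V[G\restriction Y]$, and then exploit the weak (cone) homogeneity of $\mathbb{P}$ to argue that any set of ordinals in $\mathcal{N}$ whose name has ``support'' a single $X\in\mathcal{I}$ must already lie in $V[G\restriction X]$. For directedness, given $X = E_{e_0}\times E_{e_1}\times E_{e_2}$ and $Y = E_{e_0'}\times E_{e_1'}\times E_{e_2'}$ in $\mathcal{I}$, I would produce $Z\in\mathcal{I}$ with $X,Y\preceq Z$. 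For the type $0$ and type $2$ coordinates this is immediate: since $\mathbb{P}_0$ and $\mathbb{P}_2$ are finite support products and $E_{e_0}, E_{e_2}$ are coordinate restrictions, taking $e_0\cup e_0'$ and $e_2\cup e_2'$ dominates both, as $G\restriction E_{e_0}$ and $G\restriction E_{e_2}$ are computed from the larger restrictions. The delicate coordinate is $e_1$, where the defining clause of $\mathcal{I}$ forbids two inaccessibles in the same interval $[\kappa_{\epsilon},\kappa_{\epsilon+1})$, so I cannot simply union $e_1$ and $e_1'$. Instead, for each interval meeting $e_1\cup e_1'$ I select the largest inaccessible $\alpha$ appearing there, forming a collection $e_1''$ with one member per interval. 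Using that for inaccessibles $\beta\le\alpha$ in $[\kappa_{\epsilon},\kappa_{\epsilon+1})$ the measure $\mathcal{U}_{\epsilon}\restriction\beta$ is induced from $\mathcal{U}_{\epsilon}\restriction\alpha$ and the Prikry sequence for $\mathbb{P}_{\mathcal{U}_{\epsilon}\restriction\beta}$ is the coordinatewise restriction of the one for $\mathbb{P}_{\mathcal{U}_{\epsilon}\restriction\alpha}$ (so $G\restriction\mathbb{P}_{\mathcal{U}_{\epsilon}\restriction\beta}\in V[G\restriction\mathbb{P}_{\mathcal{U}_{\epsilon}\restriction\alpha}]$), I obtain $V[G\restriction E_{e_1}], V[G\restriction E_{e_1'}]\subseteq V[G\restriction E_{e_1''}]$. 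Setting $Z = E_{e_0\cup e_0'}\times E_{e_1''}\times E_{e_2\cup e_2'}$ witnesses directedness.

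Next I would record the homogeneity bookkeeping. For each $X\in\mathcal{I}$ the forcing factors as $\mathbb{P}\cong \mathbb{P}_X * \dot{\mathbb{Q}}_X$, where $\mathbb{P}_X$ yields $G\restriction X$ and the remainder $\dot{\mathbb{Q}}_X$ is, up to the restriction maps above, again a finite support product of the weakly (cone) homogeneous pieces (the $\mathbb{P}_\alpha$ for types $0$ and $2$, and strongly compact Prikry forcings and their quotients for type $1$, homogeneous by [\cite{AH1991}, \textbf{Lemma 2.1}]). Hence $\dot{\mathbb{Q}}_X$ is weakly (cone) homogeneous, and every automorphism of $\dot{\mathbb{Q}}_X$ lifts to an automorphism of $\mathbb{P}$ acting trivially on $\mathbb{P}_X$, so it fixes each constant $\overline{G\restriction X'}$ with $X'\preceq X$ and fixes every $\overrightarrow{v}$.

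Finally I would run the main induction on the level $\alpha$ of the ramified hierarchy, proving simultaneously that every $x\in\mathcal{N}_\alpha$ admits an $\mathcal{L}_1$-name invariant under all automorphisms fixing $\mathbb{P}_X$ for a single $X\in\mathcal{I}$ (``bounded support''). At a successor step, $x\in\mathcal{N}_{\alpha+1}$ is defined by one formula $\phi\in\mathcal{L}_1$ mentioning finitely many constants $\overline{G\restriction X_1},\dots,\overline{G\restriction X_k}$ and finitely many parameters from $\mathcal{N}_\alpha$; by the induction hypothesis each parameter has bounded support, and by directedness a single $X\in\mathcal{I}$ dominates all these supports, giving $x$ bounded support $X$. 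Limit stages are trivial since $\mathcal{N}_\lambda=\bigcup_{\beta<\lambda}\mathcal{N}_\beta$. Now, given a set of ordinals $X'\in\mathcal{N}$ with bounded support $X$, whether $\beta\in X'$ is decided by some condition, and invariance of its name under all automorphisms fixing $\mathbb{P}_X$, together with the weak homogeneity of $\dot{\mathbb{Q}}_X$, forces this decision to depend only on the $\mathbb{P}_X$-coordinate; consequently
\begin{equation*}
X' = \{\beta : \exists p\in G\restriction X\ (p\Vdash \check\beta\in\dot{X'})\}\in V[G\restriction X],
\end{equation*}
as required. I expect the directedness of $\mathcal{I}$ at the type $1$ coordinates to be the main obstacle, since it is exactly there that the distinctness constraint in the definition of $\mathcal{I}$ interacts with the projection structure of strongly compact Prikry forcing; the homogeneity argument and the support induction are then routine adaptations of the standard symmetric-model machinery.
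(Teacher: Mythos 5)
Your proposal is correct and takes essentially the same route as the paper: the paper's own proof of this lemma consists exactly of noting that each factor is weakly (cone) homogeneous (citing [\cite{AH1991}, \textbf{Lemma 2.1}] and [\cite{Dim2011}, \textbf{Chapter 1}, $\S$3]), that finite support products preserve this, and then invoking the standard symmetric-model approximation machinery of [\cite{Dim2011}, \textbf{Lemma 1.29}]. What you add — directedness of $\mathcal{I}$ via the restriction/projection structure of the strongly compact Prikry forcings at the type 1 coordinates, and the bounded-support induction along the ramified hierarchy — is precisely the content the paper leaves implicit in those citations.
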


Similar to \textbf{Lemma 3.2} we can see that for every $0<n<\omega$, $\kappa_{n}$ is a cardinal in $\mathcal{N}$.
Similar to \textbf{Lemma 3.3} (more appropriately following [\cite{AH1991}, \textbf{Lemma 2.4}]), we can see that for any $0<n<\omega$ such that $f(n)=0$ if $\alpha\in (\kappa_{n+1},\kappa_{n+2})$ then $\alpha$ has collapsed to $\kappa_{n+1}$ in $\mathcal{N}$. 

\begin{lem}
{\em For any $0<n<\omega$ such that $f(n)=1$ if $\alpha\in (\kappa_{n+1},\kappa_{n+2})$ then $\alpha$ has collapsed to $\kappa_{n+1}$ in $\mathcal{N}$. Moreover, $\kappa_{n+1}$ do not carry any uniform ultrafilter in $\mathcal{N}$.}
\end{lem}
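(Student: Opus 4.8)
The plan is to treat the two assertions separately; the collapse is routine and the real content lies in the second clause.

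For the collapse, note that since $f(n)=1$ every regular $\alpha\in(\kappa_{n+1},\kappa_{n+2})$ lies in $Reg^{\eta}_{2}$, so $\mathbb{P}_{\alpha}=\{p:\kappa_{n+1}\rightharpoonup\alpha:\vert p\vert<\kappa_{n+1}\}$ is one of the factors of $\mathbb{P}_{2}$ and $E_{\{\alpha\}}\in\mathcal{I}$. Reading off the $\mathbb{P}_{\alpha}$-generic from $G\restriction E_{\{\alpha\}}$ gives a surjection $F_{\alpha}:\kappa_{n+1}\to\alpha$ lying in $V[G\restriction E_{\{\alpha\}}]\subseteq\mathcal{N}$. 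As $\kappa_{n+1}$ is still a cardinal in $\mathcal{N}$ (the analogue of \textbf{Lemma 3.2} already recorded above), $F_{\alpha}$ witnesses $(\vert\alpha\vert=\kappa_{n+1})^{\mathcal{N}}$; running this over all regular $\alpha$ in the interval collapses the whole of $(\kappa_{n+1},\kappa_{n+2})$ onto $\kappa_{n+1}$.

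For the second clause I would argue by contradiction. Suppose $\mathcal{U}\in\mathcal{N}$ is a uniform ultrafilter on $\kappa_{n+1}$; fix a symmetric name $\dot{\mathcal{U}}$ with finite support $E=E_{e_{0}}\times E_{e_{1}}\times E_{e_{2}}\in\mathcal{I}$ and a condition $p\in G$ forcing that $\dot{\mathcal{U}}$ is a uniform ultrafilter fixed by every automorphism that fixes $G\restriction E$. Because $f(n)=1$ makes every regular $\alpha\in(\kappa_{n+1},\kappa_{n+2})$ a member of $Reg^{\eta}_{2}$ while $e_{2}$ is finite, I may pick a \emph{free} coordinate $\alpha\in Reg^{\eta}_{2}\cap(\kappa_{n+1},\kappa_{n+2})$ with $\alpha\notin e_{2}$. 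Split $\alpha=P_{0}\sqcup P_{1}$ in $V$ with $\vert P_{0}\vert=\vert P_{1}\vert=\alpha$ and set $A_{i}=F_{\alpha}^{-1}(P_{i})$; then $A_{0}\sqcup A_{1}=\kappa_{n+1}$, both $A_{i}\in V[G\restriction E_{\{\alpha\}}]\subseteq\mathcal{N}$, and each $A_{i}$ is unbounded in $\kappa_{n+1}$. Let $\pi$ be an involution of $\alpha$ interchanging $P_{0}$ and $P_{1}$, and let $\hat{\pi}$ be the automorphism of $\mathbb{P}_{\alpha}$ given by $\hat{\pi}(q)=\pi\circ q$, extended by the identity off the $\alpha$-coordinate. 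Since $\alpha\notin e_{2}$, $\hat{\pi}$ fixes $G\restriction E$, hence $\hat{\pi}(\dot{\mathcal{U}})=\dot{\mathcal{U}}$, while $\hat{\pi}$ sends $\dot{A_{0}}$ to $\dot{A_{1}}$ (as $\pi^{-1}[P_{0}]=P_{1}$). Adopting the appropriate automorphism technique of [\cite{AH1991}, \textbf{Lemma 3.1}] one then forces $A_{0}$ and $A_{1}$ to have the same $\mathcal{U}$-membership. But $\mathcal{U}$ is an ultrafilter and $A_{0}\sqcup A_{1}=\kappa_{n+1}\in\mathcal{U}$, so exactly one of $A_{0},A_{1}$ lies in $\mathcal{U}$ — the contradiction.

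The main obstacle is the last inference: passing from the name-level identity $\hat{\pi}(\dot{\mathcal{U}})=\dot{\mathcal{U}}$ to genuine equality of the $\mathcal{U}$-memberships of $A_{0}$ and $A_{1}$. Applying the symmetry lemma to a condition $q\in G$ deciding ``$\dot{A_{0}}\in\dot{\mathcal{U}}$'' only shows that $\hat{\pi}(q)$ decides ``$\dot{A_{1}}\in\dot{\mathcal{U}}$'' the same way, so one needs $\hat{\pi}(q)$ to be compatible with $q$; this fails precisely when the value-part of $q$ on the $\alpha$-coordinate already commits $F_{\alpha}$ to one side of the partition (this is exactly why the same scheme does \emph{not} kill principal ultrafilters). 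The decisive point, which I would carry out in parallel with the handling in the proof of Theorem 1.1, is to use the finite support together with the \emph{uniformity} of $\mathcal{U}$ to arrange a witnessing condition whose $\alpha$-component is trivial — equivalently, to show that for a uniform ultrafilter the membership of $A_{0}$ cannot be localized on boundedly much of the free collapse, so that $\hat{\pi}(q)=q$ and the argument closes. I expect this compatibility verification to be the delicate step, the collapse claim and the bookkeeping over $\mathcal{I}$ being routine.
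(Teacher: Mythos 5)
Your proof of the collapse clause is correct and is essentially what the paper does: since $f(n)=1$ puts every regular $\alpha\in(\kappa_{n+1},\kappa_{n+2})$ into $Reg^{\eta}_{2}$, the generic for $\mathbb{P}_{\alpha}$ restricted to $E_{\{\alpha\}}\in\mathcal{I}$ lies in $\mathcal{N}$ and gives the collapsing surjection onto $\kappa_{n+1}$. The divergence is in the second clause: the paper does not argue it at all, but simply invokes Karagila--Hayut [\cite{KH2019}, \textbf{Theorem 2.4}], whereas you attempt to reprove that theorem by a direct automorphism argument --- and your attempt has a genuine gap at precisely the step you yourself flag as ``delicate.''

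Concretely: your involution $\pi$ of $\alpha$ interchanges the two pieces $P_{0},P_{1}$ of a partition of $\alpha$, hence has \emph{no} fixed points; therefore $q$ and $\hat{\pi}(q)$ are incompatible whenever the $\alpha$-component $q_{\alpha}$ of $q$ is nonempty (compatibility would force $q_{\alpha}(x)=\pi(q_{\alpha}(x))$ for every $x\in\mathrm{dom}(q_{\alpha})$). Your proposed repair --- ``use uniformity to arrange a witnessing condition whose $\alpha$-component is trivial'' --- is not carried out, and it cannot be as cheap as stated. A sanity check: nothing in your argument before that point uses uniformity, so if the compatibility could be arranged for free, the same argument would show that \emph{no} ultrafilter in $\mathcal{N}$ measures $A_{0}$, which is false (the principal ultrafilter at any point of $A_{0}$ does). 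So uniformity must enter in an essential, structural way. The natural patch --- choose $\pi$ to fix $\mathrm{ran}(q_{\alpha})$ pointwise and swap $P_{0}\setminus\mathrm{ran}(q_{\alpha})$ with $P_{1}\setminus\mathrm{ran}(q_{\alpha})$, so that $\hat{\pi}(q)=q$ --- does go through, but it yields only that $\mathcal{U}$ concentrates on $F_{\alpha}^{-1}[s]$ for some $s\in V$ with $\vert s\vert<\kappa_{n+1}$, i.e.\ on a union of fewer than $\kappa_{n+1}$ fibers of the collapse. That is not yet a contradiction with uniformity, because a single fiber $F_{\alpha}^{-1}(\xi)$ is unbounded of size $\kappa_{n+1}$, so a uniform ultrafilter could a priori concentrate there; and iterating the argument (within $s$, or with further free coordinates $\beta\notin e_{2}$) produces only a decreasing $\omega$-sequence of $\mathcal{U}$-sets, which an ultrafilter that is not countably complete can tolerate. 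Ruling out concentration on a fiber is the actual content of [\cite{KH2019}, \textbf{Theorem 2.4}], and your sketch contains no idea that addresses it. So either carry out that analysis in full, or do what the paper does and quote the Karagila--Hayut theorem; as written, the second clause remains unproved.
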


\begin{proof}
Fix an $0<n<\omega$ such that $f(n)=1$ and $\alpha\in (\kappa_{n+1},\kappa_{n+2})$. Since $\alpha\in Reg^{\eta}_{2}$ and by definition of $\mathcal{N}$, $G\restriction E_{\alpha}$ is in $\mathcal{N}$ we can see that $\alpha$ collapses to $\kappa_{n}$ in $\mathcal{N}$. By [\cite{KH2019}, \textbf{Theorem 2.4}], $\kappa_{n+1}$ do not carry any uniform ultrafilter.
\end{proof}

Following [\cite{AH1991}, \textbf{Lemma 2.3}], we can see that if $f(n)=0$ then $\kappa_{n+1}$ becomes a singular cardinal of cofinality $\omega$ in $\mathcal{N}$. Adopting the {\em appropriate automorphism technique} from [\cite{AH1991}, \textbf{Lemma 3.1}] as done in \textbf{Lemma 3.6}, we observe that in $\mathcal{N}$, all the singular cardinals in $(\omega,\eta)$ can carry Rowbottom filter as well. Arguments of [\cite{Dim2011}, \textbf{Lemma 2.20}] guarantees that all the singular cardinals in the interval $(\omega,\eta)$ are almost Ramsey. 
\section{Weakening the assumption of supercompactness by strong compactness}
\subsection{Proving Observation 1.3} We reduce the large cardinal assumption of [\cite{AC2013}, \textbf{Theorem 1}].

{\bf{\underline{Defining the ground model ($V$)}:}}
We start with a model $V_{0}$ of ZFC where $\kappa$ is a strongly compact cardinal, $\theta$ an ordinal and GCH holds. By [\cite{ADU2019}, \textbf{Theorem 3.1}] we can obtain a forcing extension $V$ where $2^{\kappa}=\theta$ and strong compactness of $\kappa$ is preserved. We assume $\lambda>\kappa$ in $V$ such that $(cf(\lambda))^{V}<\kappa$.

{\bf{\underline{Defining a symmetric inner model of a forcing extension of $V$}:}}

{\bf{\underline{Defining the partially ordered set}:}} Let $\mathcal{U}$ be a fine measure on $\mathcal{P}_{\kappa}(\lambda)$ and $\mathbb{P}=\mathbb{P}_{\mathcal{U}}$ be the strongly compact Prikry forcing. Let $G$ be $V$-generic over $\mathbb{P}_{\mathcal{U}}$.

{\bf{\underline{Defining the symmetric inner model}:}} We consider the model  constructed in [\cite{AH1991}, $\S$2]. In particular, we consider our symmetric inner model $\mathcal{N}$ to be the least model of ZF extending $V$ and containing $r\restriction\delta$ for each inaccessible $\delta\in[\kappa,\lambda)$ where $r\restriction{\delta}=\{<p_{0}\cap \delta,...p_{n}\cap\delta>: \exists f\in \mathcal{F} [\langle p_{0},...p_{n},f\rangle\in G]\}$ but not the $\lambda$-sequence of $r\restriction\delta$'s. 

Formally, we define $\mathcal{N}$ as follows.
Let $\mathcal{L}$ be the forcing language with respect to $\mathbb{P}$. Let $\mathcal{L}_{1}\subseteq \mathcal{L}$ be a ramified sublanguage which contains symbols $\overrightarrow{v}$ for each $v\in V$, a predicate symbol $\overrightarrow{V}$ (to be interpreted as $\overrightarrow{V}(\overrightarrow{v})\leftrightarrow v\in V$), and symbols $\overline{r\restriction \delta}$ for each inaccessible $\delta\in [\kappa,\lambda)$. $\mathcal{N}$ is then defined inside $V[G]$ as follows.
\begin{itemize}
    \item $\mathcal{N}_{0}=\emptyset$.
    \item $\mathcal{N}_{\alpha+1}=\{x\subseteq \mathcal{N}_{\alpha}: x\in V[G]$ and $x$ is definable over $\langle\mathcal{N}_{\alpha}, \epsilon, c\rangle_{c\in \mathcal{N}_{\alpha}}$ by a formula $\phi\in\mathcal{L}_{1}$ of rank $\leq\alpha\}$.
    \item $\mathcal{N}_{\alpha}=\cup_{\beta<\alpha} \mathcal{N}_{\beta}$ for $\alpha$ a limit ordinal.
    \item $\mathcal{N}=\cup_{\alpha\in Ord} \mathcal{N}_{\alpha}$.
\end{itemize}

We follow the homogeneity of strongly compact Prikry forcing mentioned in [\cite{AH1991}, \textbf{Lemma 2.1}] to observe the following lemma.

\begin{lem}
{\em If A$\in \mathcal{N}$ is a set of ordinals, then $A\in V[r\restriction \delta]$ for some inaccessible $\delta\in[\kappa,\lambda)$.}
\end{lem}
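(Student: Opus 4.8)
The plan is to mimic the structure of the analogous capturing lemmas (\textbf{Lemma 3.1} and \textbf{Lemma 4.1}) from the earlier sections, now in the single-forcing setting where $\mathbb{P}=\mathbb{P}_{\mathcal{U}}$ and the symmetric information is the family $\{r\restriction\delta : \delta\in[\kappa,\lambda)\text{ inaccessible}\}$. Since $\mathcal{N}$ is built as the ramified submodel generated by $V$ together with the symbols $\overline{r\restriction\delta}$, every element of $\mathcal{N}$ has a name in the sublanguage $\mathcal{L}_1$. The standard symmetry argument says that if $A$ is a set of ordinals in $\mathcal{N}$, then $A$ is definable from finitely many of these parameters $r\restriction\delta_1,\dots,r\restriction\delta_k$ together with parameters from $V$; so the whole content of the argument is to push $A$ down into a model generated by a \emph{single} $r\restriction\delta$.

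First I would fix a name $\dot A\in\mathcal{L}_1$ for $A$ and read off the finite set of parameters $\overline{r\restriction\delta_1},\dots,\overline{r\restriction\delta_k}$ occurring in it, together with the $V$-parameters. Let $\delta=\max\{\delta_1,\dots,\delta_k\}$. The key point is that the family $\{r\restriction\delta'\}$ is \emph{coherent}: for $\delta'\le\delta$ inaccessible we have $r\restriction\delta' = (r\restriction\delta)\restriction\delta'$, because $r\restriction\delta'$ is computed by intersecting the Prikry points of $r\restriction\delta$ with $\delta'$ (the operation $Z\mapsto Z\cap\delta'$ composes). Hence each $r\restriction\delta_i$ with $\delta_i\le\delta$ is already an element of $V[r\restriction\delta]$, and so $A$, being definable over $\mathcal{N}_\alpha$ from these and from $V$-parameters, lies in $V[r\restriction\delta]$ as well. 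Thus the single inaccessible $\delta\in[\kappa,\lambda)$ witnesses the claim.

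The mechanism that makes this work is the homogeneity of strongly compact Prikry forcing from [\cite{AH1991}, \textbf{Lemma 2.1}]: the automorphisms of $\mathbb{P}_{\mathcal{U}}$ that fix the relevant $r\restriction\delta$ act on all names, and an automorphism argument shows that if a statement about the ordinal-valued set $A$ is decided using symmetric information, then any two generics agreeing on $r\restriction\delta$ decide it the same way, so $A$ depends only on $r\restriction\delta$. Concretely, one shows that the value of $\dot A$ is invariant under the group of automorphisms fixing $r\restriction\delta$, and invariance of a set of ordinals under this group is exactly what places $A$ in the intermediate model $V[r\restriction\delta]$. This is the standard ``sets of ordinals are captured by a symmetric name'' argument specialized to our $\mathcal{I}$.

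The main obstacle, and the step deserving the most care, is verifying the coherence relation $r\restriction\delta'=(r\restriction\delta)\restriction\delta'$ and confirming that the $\delta'$'s indexing the parameters can genuinely be absorbed into a single $V[r\restriction\delta]$ rather than merely a finite product $V[r\restriction\delta_1,\dots,r\restriction\delta_k]$; this relies on the coherence built into the definition of $r\restriction\delta$ as restrictions of one underlying generic object, in contrast to the product situation of Sections 3 and 4 where one genuinely needed an intermediate factorization argument. Once coherence is in hand, the reduction from finitely many parameters to one is immediate, and the homogeneity-plus-symmetry argument of [\cite{AH1991}, \textbf{Lemma 2.1}] finishes the proof.
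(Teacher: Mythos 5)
Your proposal is correct and takes essentially the same approach as the paper: the paper states this lemma with no further argument, deferring entirely to the homogeneity of strongly compact Prikry forcing from [\cite{AH1991}, \textbf{Lemma 2.1}], and the details you supply (a name in $\mathcal{L}_{1}$ mentions only finitely many symbols $\overline{r\restriction\delta_{i}}$, the coherence $r\restriction\delta'=(r\restriction\delta)\restriction\delta'$ absorbs them into the single model $V[r\restriction\delta]$ for $\delta$ the maximum, and automorphisms fixing $r\restriction\delta$ then pin $A$ down inside $V[r\restriction\delta]$) are exactly the standard fleshing-out of that citation.
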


\begin{lem}
{\em In $\mathcal{N}$, $\kappa$ is a strong limit cardinal that is a limit of inaccessible cardinals.}
\end{lem}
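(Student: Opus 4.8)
The plan is to leverage two facts already available: the strongly compact Prikry forcing $\mathbb{P}_{\mathcal{U}}$, and each restriction $\mathbb{P}_{\mathcal{U}\restriction\delta}$, adds no bounded subsets of $\kappa$; and the Lemma just proved, that every set of ordinals of $\mathcal{N}$ lies in some $V[r\restriction\delta]$ with $\delta\in[\kappa,\lambda)$ inaccessible. Together these yield the single structural fact I would extract first: every bounded subset of $\kappa$ belonging to $\mathcal{N}$ already belongs to $V$, for if $A\subseteq\alpha$ with $\alpha<\kappa$ lies in $\mathcal{N}$, then $A\in V[r\restriction\delta]$ for some inaccessible $\delta$, and $\mathbb{P}_{\mathcal{U}\restriction\delta}$ adds no bounded subsets of $\kappa$, so $A\in V$. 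Hence $\mathcal{P}(\alpha)^{\mathcal{N}}=\mathcal{P}(\alpha)^{V}$ for every $\alpha<\kappa$, and therefore $V_{\kappa}^{\mathcal{N}}=V_{\kappa}^{V}$. I would also record that $\kappa$ is still a cardinal in $\mathcal{N}$, since it remains a cardinal of cofinality $\omega$ in $V[G]\supseteq\mathcal{N}$ and no collapse of $\kappa$ can appear in the smaller model.

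For the strong limit part I would fix $\alpha<\kappa$. In $V$ the cardinal $\kappa$ is still a strong limit, because the forcing of \cite{ADU2019} preserves strong compactness (hence regularity) and adds no bounded subsets of $\kappa$, so $2^{\alpha}$ is computed exactly as in the GCH ground model $V_{0}$; thus $2^{\alpha}=\beta$ for some cardinal $\beta<\kappa$, witnessed by a bijection $g\colon\mathcal{P}(\alpha)\to\beta$ lying in $V$. Since $\mathcal{P}(\alpha)^{\mathcal{N}}=\mathcal{P}(\alpha)^{V}$ and $g\in V\subseteq\mathcal{N}$, the same $g$ witnesses $\lvert\mathcal{P}(\alpha)\rvert^{\mathcal{N}}\le\beta<\kappa$ inside $\mathcal{N}$. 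As $\alpha<\kappa$ was arbitrary, $\kappa$ is a strong limit in $\mathcal{N}$; this is meaningful despite the possible failure of AC in $\mathcal{N}$, because each $\mathcal{P}(\alpha)^{\mathcal{N}}$ is explicitly well-orderable in $\mathcal{N}$ via $g$.

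For the limit-of-inaccessibles part I would use that $\kappa$, being strongly compact and hence measurable in $V$, is a limit of inaccessible cardinals in $V$; let $C=\{\delta<\kappa:\delta\text{ is inaccessible in }V\}$, which is cofinal in $\kappa$. I claim each $\delta\in C$ stays inaccessible in $\mathcal{N}$. Its regularity survives because any cofinal map $\gamma\to\delta$ with $\gamma<\delta$ lying in $\mathcal{N}$ is coded by a bounded subset of $\kappa$, hence lies in $V$, contradicting regularity of $\delta$ there; and its strong limitness survives because $\mathcal{P}(\gamma)^{\mathcal{N}}=\mathcal{P}(\gamma)^{V}$ for all $\gamma<\delta<\kappa$, exactly as in the previous paragraph. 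Since $C$ is cofinal in $\kappa$ and consists of cardinals inaccessible in $\mathcal{N}$, $\kappa$ is a limit of inaccessibles in $\mathcal{N}$, which completes the argument.

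The hard part here is conceptual rather than computational: because $\mathcal{N}$ need not satisfy AC, one must phrase ``strong limit'' and ``inaccessible'' through well-orderable power sets and verify the absoluteness of bounded subsets of $\kappa$ with care. Once the structural fact $V_{\kappa}^{\mathcal{N}}=V_{\kappa}^{V}$ is secured from the preceding Lemma together with the Prikry property of $\mathbb{P}_{\mathcal{U}\restriction\delta}$, everything else is a routine transfer of ground-model facts through the unchanged $V_{\kappa}$.
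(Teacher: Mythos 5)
Your proof is correct and takes essentially the same approach as the paper: the paper's proof also rests on the single fact that $V$ and $\mathcal{N}$ have the same bounded subsets of $\kappa$ (obtained there directly from $V\subseteq\mathcal{N}\subseteq V[G]$ and the Prikry property of the full forcing $\mathbb{P}_{\mathcal{U}}$, rather than routing through the preceding lemma and the restricted forcings $\mathbb{P}_{\mathcal{U}\restriction\delta}$), and then transfers strong-limitness and limit-of-inaccessibility from $V$ to $\mathcal{N}$. Your extra details --- the well-orderability of $\mathcal{P}(\alpha)^{\mathcal{N}}$ via a ground-model bijection and the preservation of regularity of $V$-inaccessibles below $\kappa$ --- are precisely the routine verifications the paper leaves implicit.
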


\begin{proof}
Since $V\subseteq \mathcal{N}\subseteq V[G]$ and $\mathbb{P}$ does not add bounded subsets to $\kappa$, $V$ and $\mathcal{N}$ have same bounded subsets of $\kappa$.\footnote{We can also follow \textbf{Lemma 2.2} of \cite{AH1991}.} Consequently, in $\mathcal{N}$, $\kappa$ is a limit of inaccessible cardinals and thus a strong limit cardinal as well.
\end{proof}

\begin{lem}
{\em If $\gamma\geq\lambda$ is a cardinal in $V$, then $\gamma$ remains a cardinal in $\mathcal{N}$.}
\end{lem}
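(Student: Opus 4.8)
The plan is to argue by contradiction, reducing a hypothetical collapse of $\gamma$ inside $\mathcal{N}$ to a collapse inside one of the intermediate models $V[r\restriction\delta]$ by means of \textbf{Lemma 5.1}, and then ruling this out using the chain condition of the restricted strongly compact Prikry forcing $\mathbb{P}_{\mathcal{U}\restriction\delta}$. So suppose $\gamma\geq\lambda$ is a cardinal of $V$ that fails to be a cardinal in $\mathcal{N}$. Since $\mathcal{N}$ is a model of ZF, there are an ordinal $\beta<\gamma$ and a bijection $g:\beta\to\gamma$ with $g\in\mathcal{N}$. Coding $g$ as a set of ordinals $A\in\mathcal{N}$ via a fixed ordinal pairing, \textbf{Lemma 5.1} supplies an inaccessible $\delta\in[\kappa,\lambda)$ with $A\in V[r\restriction\delta]$, and hence $g\in V[r\restriction\delta]$. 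Thus it suffices to show that for every inaccessible $\delta\in[\kappa,\lambda)$, the model $V[r\restriction\delta]$ preserves all $V$-cardinals $\geq\lambda$.

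Now recall, from the discussion of strongly compact Prikry forcing above and from [\cite{AH1991}, $\S 2$], that $r\restriction\delta$ is generic over $V$ for the strongly compact Prikry forcing $\mathbb{P}_{\mathcal{U}\restriction\delta}$ associated with the induced $\kappa$-complete fine measure $\mathcal{U}\restriction\delta$ on $\mathcal{P}_{\kappa}(\delta)$, so that $V[r\restriction\delta]$ is exactly the $\mathbb{P}_{\mathcal{U}\restriction\delta}$-generic extension of $V$. As recorded there, $\mathbb{P}_{\mathcal{U}\restriction\delta}$ has the $(\delta^{<\kappa})^{+}$-c.c. Since $\kappa$ is strongly compact, hence inaccessible, and since $\delta\in[\kappa,\lambda)$ is itself inaccessible, a routine cardinal-arithmetic computation gives $\delta^{<\kappa}=\delta$; therefore $\mathbb{P}_{\mathcal{U}\restriction\delta}$ in fact has the $\delta^{+}$-c.c. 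As $\delta<\lambda\leq\gamma$, we have $\delta^{+}\leq\lambda\leq\gamma$.

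A forcing notion with the $\theta$-c.c. preserves all cardinals $\geq\theta$; applying this with $\theta=\delta^{+}\leq\lambda$ shows that $\mathbb{P}_{\mathcal{U}\restriction\delta}$ preserves every $V$-cardinal $\geq\lambda$. In particular $\gamma$ remains a cardinal in $V[r\restriction\delta]$, contradicting the presence of the bijection $g$ there, and this contradiction establishes the lemma. The only non-routine step is the identification of $V[r\restriction\delta]$ with the $\mathbb{P}_{\mathcal{U}\restriction\delta}$-generic extension of $V$, that is, the verification that the restricted Prikry sequence $r\restriction\delta$ is genuinely $V$-generic for $\mathbb{P}_{\mathcal{U}\restriction\delta}$; this is precisely the projection analysis from [\cite{AH1991}, $\S 2$] underlying the definition of $\mathcal{N}$, whereas the chain-condition computation and the coding of $g$ as a set of ordinals are standard.
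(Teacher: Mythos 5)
Your proof follows the same skeleton as the paper's: contradiction, coding the bijection as a set of ordinals, invoking \textbf{Lemma 5.1}, and then using the $(\delta^{<\kappa})^{+}$-c.c.\ of $\mathbb{P}_{\mathcal{U}\restriction\delta}$ to rule out a collapse of $\gamma\geq\lambda$ in $V[r\restriction\delta]$. The gap is in the one step you dismiss as ``a routine cardinal-arithmetic computation,'' namely $\delta^{<\kappa}=\delta$. That computation is indeed routine for a cardinal $\delta$ that is strongly inaccessible \emph{in $V$}; but $V$ here is not an arbitrary model --- it is the extension, via [\cite{ADU2019}, \textbf{Theorem 3.1}], of a GCH model $V_{0}$, arranged so that $2^{\kappa}=\theta$ for a prescribed and possibly enormous ordinal $\theta$. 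If $\theta>\delta$, then $\delta$ is no longer a strong limit in $V$ (since $\kappa<\delta$ and $2^{\kappa}=\theta>\delta$), so strong inaccessibility of $\delta$ can only be safely asserted in $V_{0}$; and inaccessibility in $V_{0}$ by itself does not let you compute $\delta^{<\kappa}$ in $V$, which is the model in which the chain condition of $\mathbb{P}_{\mathcal{U}\restriction\delta}$ must be evaluated. So either you are reading ``inaccessible'' as ``inaccessible in $V$,'' in which case such $\delta$ may simply fail to exist below $\theta$ and your argument silently relies on an unstated assumption about $\lambda$ versus $\theta$, or you are reading it in $V_{0}$, in which case the arithmetic step is unjustified.

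The paper closes exactly this gap by a two-step transfer: GCH in $V_{0}$ gives $(\delta^{<\kappa})^{V_{0}}=\delta$, and then, because the intermediate forcing preserves cardinals and adds no new sequences of ordinals of length $<\kappa$, one gets $(\delta^{<\kappa})^{V}=(\delta^{<\kappa})^{V_{0}}=\delta$. Your write-up never mentions $V_{0}$ or the passage from $V_{0}$ to $V$, and so implicitly assumes that cardinal arithmetic above $\kappa$ in $V$ behaves as in a GCH or inaccessible setting --- precisely what the forcing that arranges $2^{\kappa}=\theta$ destroys. To repair the proof, replace the ``routine computation'' by this transfer through $V_{0}$ (the conclusion $\delta^{<\kappa}=\delta$ is true, but for this reason, not for yours). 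The remainder of your argument --- the coding, the identification of $V[r\restriction\delta]$ as a $\mathbb{P}_{\mathcal{U}\restriction\delta}$-generic extension of $V$, and the deduction that a $\delta^{+}$-c.c.\ forcing preserves all cardinals $\geq\delta^{+}\geq\lambda$ --- matches the paper and is fine.
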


\begin{proof}
For the sake of contradiction, let $\gamma$ is not a cardinal in $\mathcal{N}$. Then there is a bijection $f:\alpha\rightarrow\gamma$ for some $\alpha<\gamma$ in $\mathcal{N}$. Since $f$ can be coded by a set of ordinals, by \textbf{Lemma 5.1} $f\in V[r\restriction\delta]$ for some inaccessible $\delta\in [\kappa,\lambda)$. Since $GCH$ is assumed in $V_{0}$ we have $(\delta^{<\kappa})^{V_{0}}=\delta$, and since $Add(\kappa,\theta)$ preserves cardinals and adds no sequences of ordinals of length less than $\kappa$, we conclude that $(\delta^{<\kappa})^{V}=(\delta^{<\kappa})^{V_{0}}=\delta$.     
Now $\mathbb{P}_{\mathcal{U}\restriction \delta}$ is $(\delta^{<\kappa})^{+}$-c.c. in $V$ and hence $\delta^{+}$-c.c. in $V$. Consequently, $\gamma$ is a cardinal in $V[r\restriction\delta]$ which is a contradiction.
\end{proof}

\begin{lem}
{\em In $\mathcal{N}$, $cf(\kappa)=\omega$. Moreover, $(\kappa^{+})^{\mathcal{N}}=\lambda$ and $cf(\lambda)^{\mathcal{N}} = cf(\lambda)^V$.}
\end{lem}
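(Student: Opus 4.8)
The plan is to establish the three assertions separately, leaning on \textbf{Lemma 5.1}, \textbf{Lemma 5.2} and \textbf{Lemma 5.3}. First, for $cf(\kappa)^{\mathcal{N}}=\omega$: since $\kappa$ is strongly compact it is in particular inaccessible, so $\delta=\kappa$ is one of the inaccessibles in $[\kappa,\lambda)$ and hence $r\restriction\kappa\in\mathcal{N}$ by the very construction of $\mathcal{N}$. As $\mathcal{U}$ is fine on $\mathcal{P}_{\kappa}(\lambda)$, genericity forces $\bigcup_{n<\omega}(p_{n}\cap\kappa)=\kappa$, so the increasing sequence $\langle \sup(p_{n}\cap\kappa):n<\omega\rangle$ is definable from $r\restriction\kappa$ and is cofinal in $\kappa$. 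Since $\kappa$ remains a cardinal in $\mathcal{N}$ (by \textbf{Lemma 5.2}, or directly because $\mathbb{P}_{\mathcal{U}}$ adds no bounded subsets of $\kappa$), this yields $cf(\kappa)^{\mathcal{N}}=\omega$.

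Next, for $(\kappa^{+})^{\mathcal{N}}=\lambda$: the cardinal $\lambda$ of $V$ remains a cardinal in $\mathcal{N}$ by \textbf{Lemma 5.3}, so it suffices to collapse every $V$-cardinal in $(\kappa,\lambda)$. Fix such a cardinal $\alpha$ and choose an inaccessible $\delta$ with $\alpha<\delta<\lambda$; here one uses that $\lambda$ is a limit of inaccessibles, which is part of the choice of $\lambda$. Then $r\restriction\delta$ is $V$-generic for $\mathbb{P}_{\mathcal{U}\restriction\delta}$, and since $\mathcal{U}\restriction\delta$ is fine on $\mathcal{P}_{\kappa}(\delta)$ we get $\bigcup_{n<\omega}(p_{n}\cap\delta)=\delta$, whence $|\delta|^{V[r\restriction\delta]}=\kappa$. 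As $V[r\restriction\delta]\subseteq\mathcal{N}$, the cardinal $\alpha<\delta$ is collapsed in $\mathcal{N}$. Letting $\delta$ range over inaccessibles cofinal in $\lambda$ collapses all of $(\kappa,\lambda)$, so $\lambda$ is the least cardinal above $\kappa$ in $\mathcal{N}$, i.e. $(\kappa^{+})^{\mathcal{N}}=\lambda$.

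Finally, for $cf(\lambda)^{\mathcal{N}}=cf(\lambda)^{V}$, write $\mu=cf(\lambda)^{V}<\kappa$. The inequality $cf(\lambda)^{\mathcal{N}}\le\mu$ is immediate, as any $V$-cofinal map $\mu\to\lambda$ already lies in $\mathcal{N}\supseteq V$. For the reverse, suppose toward a contradiction that some $g\in\mathcal{N}$ is a cofinal map $\nu\to\lambda$ with $\nu<\mu$. Coding $g$ as a set of ordinals, \textbf{Lemma 5.1} gives $g\in V[r\restriction\delta]$ for some inaccessible $\delta\in[\kappa,\lambda)$. Fixing in $V$ an increasing cofinal sequence $\langle\lambda_{\xi}:\xi<\mu\rangle$ in $\lambda$, define $k:\nu\to\mu$ by letting $k(i)$ be the least $\xi$ with $g(i)<\lambda_{\xi}$. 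Then $k\in V[r\restriction\delta]$, and $k$ is cofinal in $\mu$ because $g$ is cofinal in $\lambda$. But $\nu,\mu<\kappa$, so $k$ codes a bounded subset of $\kappa$; since $\mathbb{P}_{\mathcal{U}\restriction\delta}$ adds no bounded subsets of $\kappa$, we conclude $k\in V$, contradicting the regularity of $\mu=cf(\lambda)^{V}$ in $V$. Hence $cf(\lambda)^{\mathcal{N}}=\mu=cf(\lambda)^{V}$.

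I expect the last step to be the main obstacle: the ambient extension $V[G]$ does change $cf(\lambda)$ (indeed $\lambda$ is collapsed there), so one cannot argue in $V[G]$ directly and must exploit the symmetric structure. The key is that \textbf{Lemma 5.1} lets one descend to a single intermediate model $V[r\restriction\delta]$, and only there does the Prikry property — that $\mathbb{P}_{\mathcal{U}\restriction\delta}$ adds no bounded subsets of $\kappa$ — rule out a shortened cofinal sequence, the short map $k$ into $\mu$ being precisely the bounded subset of $\kappa$ one needs. A secondary subtlety, used in the second paragraph, is that $\lambda$ must be taken to be a limit of inaccessibles, so that the collapsing models $V[r\restriction\delta]$ are cofinal enough in $\lambda$ to kill the entire interval $(\kappa,\lambda)$.
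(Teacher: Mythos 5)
Your proof is correct and its skeleton matches the paper's: the Prikry sequence through $r\restriction\kappa$ for the first clause, collapsing the interval $(\kappa,\lambda)$ for the second, and a composition-of-cofinal-maps argument for the third. Two differences are worth recording. (i) For the second clause the paper simply cites [AH1991, Lemma 2.4], which you reprove; in doing so you make explicit the hypothesis that the inaccessible cardinals of $[\kappa,\lambda)$ are cofinal in $\lambda$. The paper's setup (``$\lambda>\kappa$ with $(cf(\lambda))^{V}<\kappa$'') never states this, but it is genuinely needed: if, say, $\lambda=\kappa^{+\omega}$, then $\kappa$ is the only inaccessible in $[\kappa,\lambda)$, $\mathcal{N}=V[r\restriction\kappa]$ is a model of AC, and $(\kappa^{+})^{\mathcal{N}}=\lambda$ fails. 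Flagging it is an improvement in rigor. (ii) For the third clause the paper argues more directly: since $\mathcal{N}\subseteq V[G]$ and the full forcing $\mathbb{P}_{\mathcal{U}}$ adds no bounded subsets of $\kappa$, the models $V$ and $\mathcal{N}$ have the same bounded subsets of $\kappa$, and your map $k$ is (a code for) such a set; no descent through \textbf{Lemma 5.1} is needed, though your descent is perfectly valid. This also shows that the speculation in your closing paragraph is wrong: $V[G]$ does \emph{not} change $cf(\lambda)$. It collapses the cardinal $\lambda$ to $\kappa$, but $cf(\lambda)^{V[G]}=cf(\lambda)^{V}=\mu$, by exactly your composition argument run inside $V[G]$ --- a shorter cofinal map would yield a cofinal $k:\nu\rightarrow\mu$ with $\nu,\mu<\kappa$, i.e.\ a new bounded subset of $\kappa$, which $\mathbb{P}_{\mathcal{U}}$ does not add. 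Collapsing a cardinal and changing its cofinality are different matters here, and conflating them is what led you to believe the symmetric structure was indispensable for this step.
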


\begin{proof}
For each $\delta\in [\kappa,\lambda)$, we have $V[r\restriction \delta]\subseteq \mathcal{N}$. Consequently, $cf(\kappa)^{\mathcal{N}}=\omega$ since $cf(\kappa)^{V[r\restriction \kappa]}=\omega$.
Following [\cite{AH1991}, \textbf{Lemma 2.4}], every ordinal in $(\kappa,\lambda)$ which is a cardinal in $V$ collapses to have size $\kappa$ in $\mathcal{N}$, and so $(\kappa^{+})^{\mathcal{N}}=\lambda$. 
Since $V$ and $\mathcal{N}$ have same bounded subsets of $\kappa$, we see that $cf(\lambda)^{\mathcal{N}}=cf(\lambda)^{V}<\kappa$. 
\end{proof}

We can see that since, $V\subseteq \mathcal{N}$ and $(2^{\kappa}=\theta)^{V}$, there is a $\theta$-sequence of distinct subsets of $\kappa$ in $\mathcal{N}$. Since $cf(\kappa^{+})^{\mathcal{N}}<\kappa$ we can also see that $AC_{\kappa}$ fails in $\mathcal{N}$.
\section{(A Remark): Strong compactness and number of normal measures a successor of singular cardinal can carry}

\subsection{Proving Remark 1.4} 
We observe that in a symmetric inner model based on strongly compact Prikry forcing, the successor of a singular cardinal of cofinality $\omega$ can carry arbitrary (regular cardinal) number of normal measures under certain large cardinal assumptions. 

{\bf{\underline{Defining the ground model ($V$)}:}} Let $V$ be a model of ZFC + GCH+ UA. In $V$, let $\kappa<\lambda$ are such that $\kappa$ is strongly compact and $\lambda$ is the least measurable cardinal above $\kappa$ such that $o(\lambda)=\delta$ for some ordinal $\delta\leq \lambda^{++}$. Since $V$ models ``UA + $o(\lambda) = \delta$”, by \textbf{Proposition 2.9}, the number of normal measures $\lambda$ carries in $V$ is $\vert \delta\vert$. 

We consider the symmetric inner model $\mathcal{N}$ construction from \textbf{Theorem 1.3} (or more appropriately from [\cite{AH1991}, $\S$2]), based on strongly compact Prikry forcing. Consequently, $\lambda=\kappa^{+}$ and $cf(\kappa)=\omega$ in $\mathcal{N}$. We recall that the following facts hold in $\mathcal{N}$.

\begin{enumerate}
    \item (\textbf{Lemma 5.1}). If $A\in \mathcal{N}$ is a set of ordinals, then $A\in V[r\restriction \delta]$ for some inaccessible $\delta\in[\kappa,\lambda)$. 
    \item Any intermediate extension $V[r\restriction\delta]\subseteq \mathcal{N}$ is a small forcing extension of $V$ with respect to $\lambda$.\footnote{By a {\em small forcing extension with respect to $\kappa$} we mean a forcing extension $V[G]$ obtained from $V$ after forcing with a partially ordered set of size less than $\kappa$.} (c.f. [\cite{AH1991}, \textbf{Lemma 2.5}]).
\end{enumerate}
By (1) and (2) we can observe the following.
\begin{itemize}
    \item For any normal measure $\mathcal{U}$ over $\lambda$ in $V$, the set
    $\mathcal{U}_{0} = \{x \subseteq \lambda : \exists y \subseteq x[y \in \mathcal{U}]\}$ is a normal measure over $\lambda$ in $\mathcal{N}$ by [\cite{Apt2001}, \textbf{Lemma 2.4}]. 
    \item If $\mathcal{U}^{*}\in \mathcal{N}$ is a normal measure over $\lambda$, then for some normal measure $\mathcal{U}\in V$ over $\lambda$, $\mathcal{U}^{*} = \{x \subseteq \lambda : \exists y \subseteq x [y \in \mathcal{U}]\}$ by  [\cite{Apt2001}, \textbf{Lemma 2.5}].
\end{itemize}
Thus $\lambda$ remains a measurable cardinal with $\vert \delta\vert^{\mathcal{N}}$ many normal measures in $\mathcal{N}$. Moreover, if $\gamma\geq\lambda$ is a cardinal in $V$, then $\gamma$ remains a cardinal in $\mathcal{N}$ (c.f. \textbf{Lemma 5.3}).

\end{document}